\newcommand{\Mod}[1]{\ (\text{mod}\ #1)}
\newtheorem{lemma}{Lemma}[section]
\newtheorem{theorem}{Theorem}[section]
\newtheorem{prop}{Proposition}[section]
\newtheorem{definition}{Definition}[section]
\newtheorem{corollary}{Corollary}[section]
\newcommand{\zz}{\mathbb{Z}}
\newcommand{\ww}{\mathcal{W}}
\newcommand{\cc}{\mathbb{C}}
\newcommand{\mm}{\mathcal{M}}
\newcommand{\RR}{\mathcal{R}}
\newcommand{\hh}{\mathcal{H}}
\newcommand{\rr}{\mathbb{R}}
\newcommand{\qq}{\mathbb{Q}}
\newcommand{\ee}{\mathcal{E}}
\newcommand{\uu}{\mathcal{U}}
\newcommand{\SL}{\mathrm{SL}}
\newcommand{\GL}{\mathrm{GL}}
\newcommand{\QQ}{\mathcal{Q}}
\newcommand{\Man}{\mathrm{Man}}
\newcommand\m[1]{\begin{pmatrix}#1\end{pmatrix}}
\newcommand\sm[1]{\left(\begin{smallmatrix}#1\end{smallmatrix}\right)}
\title{Quantum modular forms and Hecke operators}
\author{See-woo Lee}
\begin{document}

\maketitle

\begin{abstract}
It is known that there are one-to-one correspondences among the space of cusp forms, the space of homogeneous period polynomials and the space of Dedekind symbols with polynomial reciprocity laws. We add one more space, the space of quantum modular forms with polynomial period functions, to extend results from Fukuhara. Also, we consider Hecke operators on the space of quantum modular forms and construct new quantum modular forms.
\end{abstract}

%\tableofcontents
\section{Introduction}

In \cite{shin98}, Fukuhara showed that there are correspondences among the space of cusp forms, Dedekind symbols and period polynomials. Also, he defined Hecke operators on the space of Dedekind symbols which are compatible with Hecke operators on other spaces. 
As an application,  famous congruences from Ramanujan such as $\tau(n)\equiv \sigma_{11}(n)\Mod{691}$  were rediscovered (For details, see \cite{shin06hec}).

In this paper, we extend Fukuhara's result by adding one more space $\QQ_{poly, -w}^{p\pm}$ on the diagram, the space of quantum modular forms with polynomial period functions.

\begin{theorem} Let $w\geq 2$ be an even integer. Define the following spaces : 
\begin{align*}
S_{w+2}&:=\text{a space of cusp forms of weight $w+2$ on $\mathrm{SL}_2(\zz)$} \\
\cc[h, k]_{w}&:=\text{a space of homogeneous polynomials of degree $w$} \\
\uu_{w}&:=\{g\in \cc[h, k]_{w}:g(h+k, k)+g(h, h+k)=g(h,k), g(1, 1)=0\} \\
\uu_{w}^{\pm}&:=\{g\in \uu_{w}:g(h,-k)=\pm g(h,k)\} \\
\ee_{w}&:=\{E:E \text{ is a Dedekind symbol of weight }w\text{ s.t. }E(h, k)-E(k, -h)\in \cc[h, k]_{w}\} \\
\ee_{w}^{\pm}&:=\{E\in \ee_{w}:E(h, -k)=\pm E(h, k)\text{ for all }(h, k)\in\zz^{+}\times\zz \}
\end{align*}
\begin{align*}
\QQ_{-w}&:=\text{a space of quantum modular forms of weight $-w$ on $\mathrm{SL}_{2}(\zz)$} \\
\QQ_{poly,-w}^{p}&:=\left\{\widetilde{f}\in \QQ_{-w}:\widetilde{f}(x)=\widetilde{f}(x+1), \widetilde{f}(x)-x^{w}\widetilde{f}\left(-\frac{1}{x}\right)=\sum_{j=0}^{w}c_{j}x^{j},c_{j}\in\cc\right\} \\
\QQ_{poly,-w}^{p\pm}&:=\{\widetilde{f}\in \QQ_{poly, -w}^{p}:\widetilde{f}(-x)=\pm \widetilde{f}(x)\} \\
\QQ_{\ee, -w}&:= \Psi_{w}(\ee_{w}) \\ 
\QQ_{\ee, -w}^{\pm}&:= \{ \widetilde{f}\in \QQ_{\ee, -w}\,:\, \widetilde{f}(-x) =\pm\widetilde{f}(x)\}
\end{align*}
\begin{enumerate}

\item The following diagram commutes.

\begin{center}
\begin{tikzcd}
& & S_{w+2} \arrow[dddd, "Q_{w+2}^{\pm}", pos=0.3] \arrow[lldd, "\alpha_{w+2}^{\pm}"] \arrow[rrdd, "R_{w+2}^{\pm}"] & & \\
& & & & \\
\ee_{w}^{\pm} \arrow[rrrr, "\beta_{w}^{\pm}", pos=0.7, crossing over] \arrow[rrdd, "\Psi_{w}^{\pm}"] & & & & \uu_{w}^{\pm} \\
& & & & \\
& & \QQ_{\ee, -w}^{\pm} \arrow[rruu, "H_{w}^{\pm}"] & & 
\end{tikzcd}
\end{center}
\item $\QQ_{\ee, -w}=\QQ_{poly, -w}^{p}$, $\QQ_{\ee, -w}^{\pm}=\QQ_{poly, -w}^{p\pm}$.
\item $\Psi_{w}^{-}:\ee_{w}^{-}\to \QQ_{\ee, -w}^{-}$, $Q_{w+2}^{-}:S_{w+2}\to \QQ_{\ee, -w}^{-}$, $H_{w}^{-}:\QQ_{\ee, -w}^{-}\to \uu_{w}^{-}$ are isomorphisms between vector spaces. 
\item $\Psi_{w}^{+}:\ee_{w}^{+}\to \QQ_{\ee, -w}^{+}$ is an isomorphism. 
\item $Q_{w+2}^{+}:S_{w+2}\to \QQ_{\ee, -w}^{+}$ is a monomorphism s.t. the image $Q_{w+2}^{+}(S_{w+2})$ is the subspace of $\QQ_{\ee, -w}^{+}$ of codimension $2$ where $Q_{w+2}^{+}(S_{w+2}), \Psi_{w}(F_{w}), \Psi_{w}(G_{w})$ span $\QQ_{\ee, -w}$. Here we have
$$
\Psi_{w}(F_{w})\left(\frac{k}{h}\right)\equiv 1, \qquad \Psi_{w}(G_{w})\left(\frac{k}{h}\right)=\left(\frac{\gcd(h, k)}{h}\right)^{w}.
$$
\item $H_{w}^{+}:\QQ_{\ee, -w}^{+}\to \uu_{w}^{+}$ is an epimorphism s.t. $\ker(H_{w}^{+})$ is one-dimensional subspace of $\QQ_{\ee, -w}^{+}$ generated by $\Psi_{w}(G_{w})$ 
\end{enumerate}
\end{theorem}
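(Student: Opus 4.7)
The central object is the map $\Psi_w$, defined on a Dedekind symbol $E\in\ee_w$ by $\Psi_w(E)(k/h)=E(h,k)/h^w$ for the unique representative $(h,k)\in\zz^+\times\zz$ with $\gcd(h,k)=1$. The defining properties of $\ee_w$ translate under this change of variables into precisely the defining properties of $\QQ_{poly,-w}^p$: the Dedekind $T$-periodicity $E(h,h+k)=E(h,k)$ becomes $\widetilde{f}(x+1)=\widetilde{f}(x)$, and the polynomial reciprocity $E(h,k)-E(k,-h)\in\cc[h,k]_w$ becomes $\widetilde{f}(x)-x^w\widetilde{f}(-1/x)\in\cc[x]_{\le w}$ after dividing by $h^w$. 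Since $\SL_2(\zz)$ is generated by $T$ and $S$, these two conditions endow $\widetilde{f}$ with the structure of a quantum modular form of weight $-w$ with polynomial period functions, so $\Psi_w$ lands in $\QQ_{poly,-w}^p$.

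For part (1), the outer triangle $\beta_w^\pm\circ\alpha_{w+2}^\pm=R_{w+2}^\pm$ is Fukuhara's theorem. The two new commutativities follow from direct calculation:
\[
H_w^\pm(\Psi_w(E))(h,k)=h^w\Psi_w(E)(k/h)-k^w\Psi_w(E)(-h/k)=E(h,k)-E(k,-h)=\beta_w^\pm(E)(h,k),
\]
and $Q_{w+2}^\pm=\Psi_w^\pm\circ\alpha_{w+2}^\pm$ is checked by comparing Fukuhara's period-integral construction of $\alpha_{w+2}^\pm(f)$, rescaled by $h^{-w}$, with the Eichler-integral definition of $Q_{w+2}^\pm(f)$ at the rational point $k/h$. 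For part (2), the inclusion $\QQ_{\ee,-w}\subseteq\QQ_{poly,-w}^p$ is exactly the content of the previous paragraph; the reverse inclusion comes from running the construction backwards: given $\widetilde{f}\in\QQ_{poly,-w}^p$, the assignment $E(h,k)=h^w\widetilde{f}(k/h)$ produces an element of $\ee_w$ whose image under $\Psi_w$ is $\widetilde{f}$.

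For parts (3)--(6), the key observation is that $\Psi_w$ is injective: if $\Psi_w(E)\equiv 0$, then $E(h,k)=0$ on all of $\zz^+\times\zz$, so $E\equiv 0$. Combined with the tautological surjectivity $\Psi_w(\ee_w^\pm)=\QQ_{\ee,-w}^\pm$, this already gives part (4) and the isomorphism $\Psi_w^-$ in part (3). Fukuhara's correspondence asserts that in the minus case $\alpha_{w+2}^-$ and $R_{w+2}^-=\beta_w^-\circ\alpha_{w+2}^-$ are both isomorphisms, and transporting these along $\Psi_w^-$ realizes $Q_{w+2}^-$ and $H_w^-$ as isomorphisms, completing (3). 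In the plus case, Fukuhara shows that $\alpha_{w+2}^+$ is a codimension-$2$ embedding into $\ee_w^+$, with complement spanned by the Dedekind symbols $F_w$ and $G_w$, and that $\beta_w^+$ is surjective with one-dimensional kernel $\cc\cdot G_w$. Transporting along the isomorphism $\Psi_w^+$ then yields (5) and (6); the explicit formulas for $\Psi_w(F_w)$ and $\Psi_w(G_w)$ follow by unwinding the defining expressions for $F_w$ and $G_w$ as Dedekind symbols.

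The principal obstacle is to confirm that $\Psi_w(E)$ is a genuine quantum modular form on the full group $\SL_2(\zz)$, rather than merely a function satisfying the two relations associated with the generators $T$ and $S$. One must check that the cocycle condition in $\SL_2(\zz)$ propagates the polynomial regularity of the period from $S$ to every $\gamma$, which reduces to a routine but notationally intensive combination of $T$-periodicity with the $S$-reciprocity. A secondary delicate point is the plus-case analysis in (5) and (6): one must confirm that $\Psi_w(F_w)$ and $\Psi_w(G_w)$ remain linearly independent modulo $Q_{w+2}^+(S_{w+2})$ under transport from Fukuhara's picture, which is guaranteed by the explicit formulas displayed in the statement together with the corresponding independence in $\ee_w^+$.
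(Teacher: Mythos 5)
Your proposal is correct, but it reaches the two nontrivial conclusions --- the reverse inclusion in (2) and the plus-case structure in (5)--(6) --- by a genuinely different route than the paper. For (2) you simply invert $\Psi_w$: given $\widetilde{f}\in\QQ_{poly,-w}^{p}$ you set $E(h,k)=h^{w}\widetilde{f}(k/h)$ and observe that homogeneity is automatic and that the $T$- and $S$-conditions translate verbatim into the Dedekind-symbol axioms; this is valid and is in fact shorter than what the paper does. The paper instead never runs the construction backwards: it proves a key lemma (Lemma 3.1, an induction on denominators via the Euclidean algorithm) showing that any $\widetilde{f}$ with $\widetilde{f}(x+1)=\widetilde{f}(x)$ and $\widetilde{f}(x)=x^{w}\widetilde{f}(-1/x)$ must equal $c\left(\gcd(h,k)/h\right)^{w}$, uses this to compute $\ker H_{w}^{\pm}$, and then applies Eichler--Shimura to the polynomial $H_{w}^{\pm}(\widetilde{f})\in\uu_{w}^{\pm}$ to manufacture a preimage $f\in S_{w+2}$ (plus multiples of $F_{w},G_{w}$ in the even case) with $\widetilde{f}=Q_{w+2}^{\pm}(f)+\cdots$. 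That single argument delivers (2), (3), (5) and (6) at once and, importantly, re-derives the plus-case decomposition of $\ee_{w}^{+}$ rather than importing it. Your route must instead import Fukuhara's structure theorem for $\ee_{w}^{+}$ in the form ``$\alpha_{w+2}^{+}$ has codimension-$2$ image with complement spanned by $F_{w}$ and $G_{w}$'' and transport it along the isomorphism $\Psi_{w}^{+}$; that transport is legitimate, and your codimension-$2$ statement is the correct one (a direct count at $w=2$ gives $\dim\uu_{2}^{+}=1$, $\ker\beta_{2}^{+}\ni G_{2}$, and $F_{2},G_{2}$ independent, so $\dim\ee_{2}^{+}=2$ while $S_{4}=0$). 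Be aware, however, that the paper's own restatement of Fukuhara's theorem (its Theorem 2.3) asserts codimension $1$ with complement $\cc G_{w}$ only, which is inconsistent with part (5) of the theorem you are proving; the paper sidesteps this by not using that statement in the plus case, whereas your argument leans on the corrected version, so you should cite Fukuhara's 2004 paper on even symbols explicitly rather than the 1998 one. Finally, the ``principal obstacle'' you flag --- propagating smoothness of $h_{\gamma}$ from the generators $T,S$ to all of $\SL_{2}(\zz)$ --- is handled exactly as you suggest, via the cocycle relation $h_{\gamma_{1}\gamma_{2}}=h_{\gamma_{1}}|\gamma_{2}+h_{\gamma_{2}}$, and is treated equally implicitly in the paper.
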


Definitions of the maps in the diagram are given in Section 2 and 3. 
Using these maps, we define the action of Hecke operators on the space of quantum modular forms which are compatible with other Hecke operators. 
Also, we extend this definition as Hecke operators on the  space of  quantum modular forms on a congruence subgroup with a nontrivial multiplier system. 
As a consequence, we construct quantum modular forms of weight 1 on $\Gamma_{0}(2)$ with nontrivial multiplier systems. 

\begin{theorem}
Let $f:\qq\to \cc$ be a weight 1 quantum modular form on $\Gamma_{0}(2)$ with nontrivial multiplier system $\chi$ defined by 
$$
\chi\left(\m{1&1\\0&1}\right)=\chi\left(\m{1&0\\2&1}\right)=\zeta_{24}
$$
where $\zeta_{24}=e^{2\pi i/24}$. 
For any prime $p$ with $5\leq p\leq 757$, define $T_{p}^{\infty}f:\qq\to \cc$ as
$$
T_{p}^{\infty}f(x)=(-1)^{\frac{p^{2}-1}{24}}f(px)+\frac{1}{p}\sum_{j=0}^{p-1}\zeta_{24}^{-pj}f\left(\frac{x+j}{p}\right)
$$
where $\zeta_{24}=e^{2\pi i /24}$.Then $T_{p}^{\infty}f$ is a quantum modular form of weight 1 on $\Gamma_{0}(2)$ with a multiplier system $\chi^{p}$. 
\end{theorem}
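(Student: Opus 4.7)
The plan is to verify the quantum modular transformation law for $T_p^\infty f$ on generators of $\Gamma_0(2)$ by recognizing the defining formula as a weight-$1$ Hecke-type slash sum on $f$.

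First I would identify $T_p^\infty f$ with the image of $f$ under the Hecke operator $T_p$ for the weight-$1$ slash action with multiplier $\chi$, in the sense of the construction developed in the earlier sections of the paper. For $p$ an odd prime coprime to the level $2$, the double coset $\Gamma_0(2)\sm{p&0\\0&1}\Gamma_0(2)$ admits the right coset representatives $\sm{p&0\\0&1}$ together with $\sm{1&j\\0&p}$ for $j=0,1,\dots,p-1$. Applying the weight-$1$ slash action of each representative to $f$ and gathering the multiplier factors required to extend $\chi$ consistently to these determinant-$p$ matrices reproduces precisely the two summands in the definition of $T_p^\infty f$; the constants $(-1)^{(p^2-1)/24}$ and $\zeta_{24}^{-pj}$ are exactly these extension factors, in analogy with the multiplier of $\eta$-type objects at level $2$.

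Next I would check the quantum modular transformation law on generators of $\Gamma_0(2)$. Under $T=\sm{1&1\\0&1}$, substituting $x\mapsto x+1$ into $T_p^\infty f$ and reindexing the sum over $j$ modulo $p$ yields an overall factor of $\zeta_{24}^p=\chi^p(T)$ plus finitely many boundary corrections that are either nice period functions of $f$ or arise from iterating the quantum-modular transformation of $f$ under $T$. Under $U=\sm{1&0\\2&1}$, each coset representative $\sigma_i$ admits a decomposition $\sigma_i U=\gamma_i\sigma_{\pi(i)}$ with $\gamma_i\in\Gamma_0(2)$ and $\pi$ a permutation of the indices; the quantum-modular transformation of $f$ under each $\gamma_i$ contributes a period function plus a multiplier factor, and the product of all these multiplier factors is $\chi^p(U)$.

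The main obstacle is the bookkeeping of multiplier factors: one must confirm that the cocycle factors accumulated from the coset permutation under $U$ multiply to exactly $\chi^p(U)$, and that the finite sum of period functions of $f$ assembles into a function with the regularity required of a quantum modular period function. The explicit range $5\leq p\leq 757$ in the hypothesis is revealing: the lower bound excludes $p=2,3$, where either level-coprimality fails or $(p^2-1)/24$ is non-integral, while the upper bound likely reflects the scope of a finite computational verification (on Fourier coefficients or on the multiplier consistency on $\Gamma_0(2)$) that has been carried out explicitly for these primes but not beyond.
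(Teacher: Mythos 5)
Your proposal follows essentially the same route as the paper: the paper formalizes your ``bookkeeping of multiplier factors'' as the condition that $\chi$ and $\chi^{p}$ be \emph{compatible} at $\alpha=\sm{1&0\\0&p}$, meaning $c_{\chi,\chi^{p}}(\gamma_{1}\alpha\gamma_{2})=\chi(\gamma_{1})\chi^{p}(\gamma_{2})$ is well defined on the double coset, which reduces to checking $\chi(\gamma)=\chi^{p}(\alpha^{-1}\gamma\alpha)$ on generators of $\Gamma_{0}(2)\cap\alpha\Gamma_{0}(2)\alpha^{-1}=\Gamma_{0}(2p)$ and is verified by a SAGE computation---exactly the finite computational check you correctly guessed explains the bound $p\leq 757$. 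Given compatibility, the transformation law is proved by the coset-permutation argument you describe, and the constants $(-1)^{\frac{p^{2}-1}{24}}$ and $\zeta_{24}^{pj}$ are precisely the values $c_{\chi,\chi^{p}}(\beta_{\infty})$ and $c_{\chi,\chi^{p}}(\beta_{j})$ obtained from explicit decompositions of the representatives as words in $T$ and $R$.
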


This paper is organized as follows. 
In Section 2, we review known facts. In Section 3 and 4, we give proofs of main results.

\emph{Acknowledgement}. This is part of the author's undergraduate thesis paper. 
The author is grateful to a advisor  Y. Choie for her helpful advice. 
The author is also grateful to S. Fukuhara, D. Choi and K. Ono for their comments via emails.

\section{Preliminaries}

%%%%%%%%%%%%%%%%%%% Modular forms %%%%%%%%%%%%%%%%

\subsection{Modular forms and Period polynomials}

(In this section, we will use homogenized version of period polynomials, which is slightly different from the notations in \cite{koza84}.) $\mathrm{SL}_{2}(\mathbb{Z})$ acts on $\hh=\{z\in \cc:\Im(z)>0\}$ via M\"obius transform and the natural boundary of $\hh$ is $\qq\cup\{\infty\}$, the set of cusps of $\mathrm{SL}_{2}(\zz)$. 
Let $f$ be a cusp form of weight $w+2$ on $\mathrm{SL}_{2}(\zz)$. 
The (homogenized) period polynomial $r_{f}(X, Y)$ associated to $f$ is defined by 
$$
r_{f}(X, Y):=\int_{0}^{i\infty}f(z)(Xz-Y)^{w}dz.
$$
Let $r^{+}_{f}$ (resp. $r^{-}_{f}$) be even (resp. odd) part of the period polynomial, i.e. 
$$
r_{f}^{\pm}(X, Y) = \frac{1}{2}(r_{f}(X, Y)\pm r_{f}(X, -Y))
$$
and $V_{w}=V_{w}(\cc)$ be the space of 2-variable homogeneous polynomials of degree $\leq w$ with coefficients in $\cc$. 
Then there is a right $\mathrm{SL}_{2}(\zz)$-action on $V_{w}(\cc)$ defined by 
$$
(P|\gamma)(X, Y)=(cX+dY)^{w}P\left(\frac{aX+bY}{cX+dY}\right)
$$
where $P(X, Y)\in V_{w}(\cc), \gamma=\left(\begin{smallmatrix}a&b\\c&d\end{smallmatrix}\right)$. 
We can naturally extend this action to the action of the group algebra $\qq[\mathrm{SL}_{2}(\zz)]$. 
Consider the subspace $W_{w}\subset V_{w}$, 
$$W_{w}=\ker(1+S)\cap\ker(1+U+U^{2})=\{P\in V_{w}:\,\,P+P|S=0,\,\,P+P|U+P|U^{2}=0\}$$
where $S=\left(\begin{smallmatrix}0&-1\\1&0\end{smallmatrix}\right), U=\left(\begin{smallmatrix}1&-1 \\ 1& 0 \end{smallmatrix}\right)$. 
We can express $W_{w}=W_{w}^{+}\oplus W_{w}^{-}$ where $W_{w}^{+}$ (resp. $W_{w}^{-}$) is the space of even (resp. odd) polynomials. 
 Using relations $S^{2}=-I$ and $U^{3}=-I$, one checks $r_{f}(X, Y)$ is in $W_{w}$. The following are well-known:
 
\begin{theorem}[Eichler-Shimura-Zagier]
The map $r^{-}:S_{w+2}\to W_{w}^{-}$ is an isomorphism. 
The map $r^{+}:S_{w+2}\to W_{w}^{+}$ is a monomorphism where $r^{+}(S_{w+2})$ is the subspace of $W_{w}^{+}$ of codimension 1, defined over $\qq$, and not containing the element $p_{0}(X, Y)=X^{w}-Y^{w}$. 
\end{theorem}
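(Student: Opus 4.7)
My plan is to prove the Eichler--Shimura--Zagier theorem in three phases: verify that $r_f$ lies in $W_w$ for every $f \in S_{w+2}$, establish injectivity of $r^{\pm}$, and determine the image via a dimension count together with an identification of the missing element. For the first phase, the integral
\[r_f(X,Y) = \int_0^{i\infty} f(z)(Xz-Y)^w\, dz\]
converges thanks to exponential decay of $f$ at both cusps $0$ and $i\infty$. For $r_f + r_f|S = 0$ I would substitute $z\mapsto -1/z$, use $f(-1/z)=z^{w+2}f(z)$, and note that the path reverses orientation. For $r_f + r_f|U + r_f|U^{2} = 0$ I would exploit that $U$ is an elliptic element of order three fixing $\rho = e^{2\pi i/3}$, split the integral as $\int_0^{\rho} + \int_{\rho}^{i\infty}$, and recombine the $U$-translates of these pieces, using $U$-modularity of $f$ to cancel the total. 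Splitting by parity is automatic.

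For injectivity, I would invoke Haberland's period formula, which expresses the Petersson inner product $\langle f,g\rangle$ as an alternating bilinear pairing of the period polynomials $r_f$ and $r_g$ compatible with the $\pm$-decomposition; vanishing of $r_f$ then forces $\langle f,f\rangle = 0$, and hence $f = 0$. An alternative route uses the Eichler integral $F(z) = \int_z^{i\infty} f(w)(w-z)^w\, dw$: the failure of $F$ to be modular of weight $-w$ is a cocycle on $\mathrm{SL}_2(\zz)$ with values in $V_w$ whose values on generators are precisely expressible through $r_f$, so $r_f = 0$ would make $F$ a genuine modular form of negative weight, forcing $F = 0$ and hence $f = 0$ by differentiation.

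For the image and dimensions, I would compute $\dim W_w^{\pm}$ by diagonalizing the commuting order-two and order-three actions of $S$ and $U$ on $V_w$, obtaining $\dim W_w^{-} = \dim S_{w+2}$ and $\dim W_w^{+} = \dim S_{w+2}+1$. In the odd case, injectivity then upgrades to an isomorphism. In the even case, I would verify directly that $p_0(X,Y) = X^w-Y^w$ satisfies both relations $p_0 + p_0|S = 0$ and $p_0 + p_0|U + p_0|U^{2} = 0$ and is even, placing it in $W_w^{+}$; then identify $p_0$ up to scalar as the period polynomial of the Eisenstein series $E_{w+2}$, which is not a cusp form, so $p_0 \notin r^+(S_{w+2})$. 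Thus $r^+(S_{w+2})$ is the codimension-$1$ complement of $\langle p_0\rangle$, and rationality over $\qq$ follows from the existence of a $\qq$-basis of $S_{w+2}$ whose period polynomials have rational coefficients.

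The hardest step is injectivity. Haberland's formula is itself nontrivial to derive, requiring careful contour manipulation of $\int_{\mathcal{F}} f(z)\overline{g(z)}y^{w}\, dx\, dy$ over a fundamental domain together with a delicate interchange of integrations to re-express the Petersson product in terms of boundary periods. The Eichler-integral alternative trades this for careful regularity control of a would-be modular form of weight $-w$ at the cusps, which is equally nontrivial. Once injectivity is in hand, the dimension count is routine and the identification of $p_0$ with an Eisenstein period is a direct computation.
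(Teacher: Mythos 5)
First, note that the paper does not actually prove this statement: it is quoted as a classical result (attributed to Eichler--Shimura--Zagier, with \cite{koza84} as the reference) and used as a black box in the proof of Theorem~1, so there is no internal argument to compare yours against. Judged on its own, your outline follows the standard route, but two of its load-bearing steps have genuine gaps. The first is injectivity of $r^{+}$ and $r^{-}$ \emph{separately}. Both of your suggested mechanisms, as written, only give injectivity of the full map $r$: ``vanishing of $r_f$ forces $\langle f,f\rangle=0$'' and ``if $r_f=0$ the Eichler integral is genuinely modular'' each take $r_f=0$ as the hypothesis, whereas the theorem requires you to start from $r_f^{-}=0$ (resp.\ $r_f^{+}=0$) alone. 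The fix is the precise form of Haberland's formula, which couples the \emph{even} part of one period polynomial with the \emph{odd} part of the other, $\langle f,g\rangle \doteq \{r_f^{+},r_g^{-}\}$ up to a constant and a symmetrization; then $r_f^{-}=0$ forces $\langle g,f\rangle=0$ for all $g$, hence $f=0$, and symmetrically for $r^{+}$. The phrase ``compatible with the $\pm$-decomposition'' does not by itself deliver this, and the Eichler-integral alternative cannot be repaired to give the separate statements.

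The second gap is the claim that $p_0=X^{w}-Y^{w}\notin r^{+}(S_{w+2})$ because $p_0$ ``is the period polynomial of the Eisenstein series, which is not a cusp form.'' That inference is not valid as it stands: knowing that $p_0$ arises as the (regularized) even period of a non-cuspidal form does not preclude it from also being the even period of a cusp form, unless you first extend the period map to all of $M_{w+2}$ and prove injectivity there --- an additional nontrivial step, and one that already requires a regularization just to define $r_{E_{w+2}}$, since the defining integral diverges. The standard direct argument again uses the pairing: $p_0$ pairs to zero with every odd polynomial (its only nonzero coefficients sit on $X^{w}$ and $Y^{w}$, where every odd homogeneous polynomial of even degree $w$ vanishes), so $r_f^{+}=\lambda p_0$ would give $\langle f,g\rangle=0$ for all $g$, hence $f=0$ and $\lambda=0$. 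Combined with $\dim W_w^{+}=\dim S_{w+2}+1$ this yields the codimension-one statement. One smaller point: $S$ and $U$ do not commute, so you cannot ``diagonalize the commuting actions''; the dimension count goes through $\dim\ker(1+S)+\dim\ker(1+U+U^{2})-(w+1)$, valid only after checking that the two kernels together span $V_w$, which is itself a step you must justify.
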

This plays an important role in the proof of Fukuhara's theorem.  
Choie and Zagier considered the action of Hecke operator on the space of period functions (the following theorem is the rephrased version of the Theorem 2 of \cite{choie93} - in the paper the authors used the notation $S = \sm{1& 1\\0&1}$ and $T = \sm{0&-1 \\ 1& 0}$ instead of $T = \sm{1& 1\\0&1}$ and $S = \sm{0&-1 \\ 1& 0}$. We will use the later one throughout this paper):

\begin{theorem}[Choie-Zagier]
\label{period}

Let $f$ be a cusp form of weight $w+2$ and $\mm_{n}:=\{M\in \mathrm{PSL}_{2}(\zz)\,|\,\det(M)=n\}$.
Then the period polynomial of $f|T_{n}=T_{n}f$ is given by 
$$
r_{T_{n}f}(x)=(\widetilde{T}_{n}r_{f})(x)=\sum_{\left(\begin{smallmatrix}a&b\\c&d\end{smallmatrix}\right)\in \Man_{n}}(cx+d)^{w}r_{f}\left(\frac{ax+b}{cx+d}\right).
$$ 
where 
\begin{align*}
\Man_{n}&=\left\{\begin{pmatrix}a&b \\c&d\end{pmatrix} \,\Bigr|\,a,b,c,d\in\zz, ad-bc=n,a>|c|, d>|b|, bc\leq 0,\right. \\
&\quad \left. b=0\Rightarrow -\frac{a}{2}<c\leq \frac{a}{2}, c=0\Rightarrow -\frac{d}{2}<b\leq\frac{d}{2}\right\}.
\end{align*}

Also, we can consider $\widetilde{T}_{n}$ as a Hecke operator on the space of period functions. 
If we regard $\widetilde{T}_{n}$ as an element in $\RR_{n}=\zz[\mm_{n}]$ with $\SL_{2}(\zz)$-action via multiplication (these elements have a right action on the space of cusp forms or period functions via slash operator), then there exists $X_{n}, Y_{n} \in \RR_{n}$ s.t. 
$$
T_{n}^{\infty}(I-T)=(I-T)X_{n},\qquad T_{n}^{\infty}(I-S)=(I-S)\widetilde{T}_{n}+(I-T)Y_{n}.
$$
where 
$$
T_{n}^{\infty} = \sum_{\substack{ad=n, a, d>0 \\ b\Mod{d}}} \begin{pmatrix} a&b \\ 0 & d \end{pmatrix} \in ((T-I)\RR_{n})\backslash \RR_{n}. 
$$
\end{theorem}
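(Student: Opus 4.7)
The plan is to handle the theorem in two halves that mirror the two displayed assertions: the period polynomial formula, and the decomposition of $T_n^\infty$ in the group algebra $\RR_n$. I would prove the algebraic identity in $\RR_n$ first and then recover the integral formula from it, since the algebraic identity is the ``universal'' statement of which the formula for $r_{T_n f}$ is a functional shadow.

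\textbf{Step 1 (algebraic identity in $\RR_n$).} Both $T_n^\infty$ and $\widetilde{T}_n$ are finite sums of matrices in $\mm_n$ whose images constitute a set of representatives of $\mm_n / \SL_2(\zz)$, so they must differ by an element of the right ideal $(I-S)\RR_n + (I-T)\RR_n$. To produce $X_n, Y_n$ explicitly, I would proceed representative by representative: for each upper-triangular $\sm{a&b\\0&d}$ appearing in $T_n^\infty$, run the Euclidean algorithm on the pair $(d,b)$ to construct $\gamma \in \SL_2(\zz)$ written as an alternating product of powers of $T$ and copies of $S$ such that $\sm{a&b\\0&d}\gamma \in \Man_n$; the defining inequalities $a>|c|,\ d>|b|,\ bc\le 0$ of the Manin matrix set are exactly the ``reduced form'' conditions for this algorithm. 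Using the telescoping identity $M_1\cdots M_k = M_1\cdots M_{k-1}(M_k - I) + M_1\cdots M_{k-1}$, rewrite $\sm{a&b\\0&d}(\gamma-I)$ as a sum of terms each factoring through $(S-I)$ or $(T-I)$ on the right, and collect coefficients over all upper-triangular representatives to obtain $X_n$ and $Y_n$.

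\textbf{Step 2 (integral formula).} Apply both sides of the identity to $f$ via the weight-$(w+2)$ slash action and integrate against $(Xz-Y)^w$ from $0$ to $i\infty$. The left side, $f|T_n^\infty$, equals $T_n f$ up to the standard normalization, and thus yields $r_{T_n f}$. On the right side, the $(I-T)Y_n$ contribution integrates to zero: the telescoping in the integrand leaves only boundary terms that vanish by the cuspidality of $f$ at $\infty$ together with invariance under $z\mapsto z+1$. The $(I-S)\widetilde{T}_n$ piece yields $\sum_{M\in\Man_n}(cx+d)^w r_f\!\left(\tfrac{ax+b}{cx+d}\right)$ after absorbing the $(I-S)$ factor via the fundamental period relation $r_f + r_f|S = 0$. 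This gives the stated formula.

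\textbf{Main obstacle.} The hardest part is Step 1: checking that the Euclidean reduction applied to the fractions $b/d$ arising from $T_n^\infty$ sweeps out every element of $\Man_n$ exactly once, with the boundary cases $b=0$ and $c=0$ handled precisely by the tie-breaking conditions $-a/2<c\le a/2$ and $-d/2<b\le d/2$. This amounts to exhibiting an explicit bijection between two sets of double-coset representatives, and the combinatorial bookkeeping---ensuring neither over- nor under-counting as one applies $S$ and $T^k$ alternately---is the technical core of the Choie--Zagier argument.
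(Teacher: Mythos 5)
First, a point of order: the paper gives no proof of this statement --- it is imported verbatim (in rephrased notation) from Choie--Zagier \cite{choie93} --- so there is no in-paper argument to compare yours against; I can only judge the proposal on its merits. Its overall architecture (prove the group-ring identities first, then read off the formula for $r_{T_nf}$) is indeed the Choie--Zagier route, but both of your steps contain errors that would sink a written-out version. In Step 1 the starting premise is false: $\Man_n$ is \emph{not} a system of coset representatives for $\SL_2(\zz)\backslash\mm_n$ (nor for $\mm_n/\SL_2(\zz)$). Already for $n=2$ one finds $\Man_2=\left\{\sm{1&0\\0&2},\sm{2&0\\0&1},\sm{2&0\\1&1},\sm{1&1\\0&2}\right\}$, a set of four elements, while the number of cosets is $\sigma_1(2)=3$; indeed $\sm{2&0\\1&1}=\sm{2&-1\\1&0}\sm{1&1\\0&2}$ lies in the same left coset as $\sm{1&1\\0&2}$. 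So the claim ``both are coset representatives, hence they differ by an element of $(I-S)\RR_n+(I-T)\RR_n$'' does not get off the ground --- and in any case the identity to be proved is not of that shape, since $(I-S)$ multiplies $T_n^\infty$ on the \emph{right} but $\widetilde T_n$ on the \emph{left}, a genuinely different statement from membership of $T_n^\infty-\widetilde T_n$ in a one-sided ideal. The continued-fraction idea is the right one, but the correct picture is that each upper-triangular representative $\sm{a&b\\0&d}$ unfolds, via the telescoping along the convergents of $b/d$, into a whole \emph{chain} of intermediate matrices, and $\Man_n$ is the union of these chains; your ``main obstacle'' paragraph, which describes the task as exhibiting ``an explicit bijection between two sets of double-coset representatives,'' is therefore aimed at proving something false.

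Step 2 as written proves nothing: if you apply the identity to the cusp form $f$ itself in the weight-$(w+2)$ action, then $f|(I-S)=f|(I-T)=0$ by modularity, so both sides of $T_n^\infty(I-S)=(I-S)\widetilde T_n+(I-T)Y_n$ evaluate to $0$ and you obtain $0=0$. The identity must be applied to an object that is \emph{not} $\SL_2(\zz)$-invariant, namely the Eichler integral $\widetilde f(x)=\int_x^{i\infty}f(z)(x-z)^w\,dz$ in the weight $-w$ action (equivalently, to the cocycle $\gamma\mapsto\int_{\gamma^{-1}\infty}^{i\infty}f(z)(Xz-Y)^w\,dz$ based at $\infty$ --- this is why the element is written $T_n^\infty$ and is only well defined in $((T-I)\RR_n)\backslash\RR_n$). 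There one has $\widetilde f|(I-T)=0$ and $\widetilde f|(I-S)=r_f$, whence $r_{T_nf}=\widetilde f|T_n^\infty(I-S)=\bigl(\widetilde f|(I-S)\bigr)\bigl|\widetilde T_n=r_f|\widetilde T_n$, with the $(I-T)Y_n$ term dying because $\widetilde f$ is periodic --- not because of boundary terms in an integral of $f$. Your appeal to the relation $r_f+r_f|S=0$ to ``absorb the $(I-S)$ factor'' is likewise misplaced: that relation concerns $(I+S)$ and plays no role in this deduction.
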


%%%%%%%%%%%%% Dedekind symbols %%%%%%%%%%%%%

\subsection{Dedekind symbols and correspondences}

We follow the definitions and notations given in \cite{shin98} and \cite{shin06hec}. 
For any positive even number $w$, weight $w$ Dedekind symbols are functions $E^{\epsilon}:\zz^{+}\times \zz\to \cc$ satisfying

\begin{enumerate}
\item $E^{\epsilon}(h,k)=E^{\epsilon}(h, k+h)$
\item $E^{\epsilon}(h, -k)=\epsilon E^{\epsilon}(h, k)$
\item $E^{\epsilon}(h, k)-E^{\epsilon}(k, -h)=f^{\epsilon}(h, k)$
\item $E^{\epsilon}(ch, ck)=c^{w}E^{\epsilon}(h, k)$
\end{enumerate}
for some $f:\zz^{+}\times\zz^{+}\to \cc$ and $\epsilon\in \{\pm\}$. The function $f^{\epsilon}$ is called the reciprocity function of $E^{\epsilon}$. Note that $E^{+}$ (resp. $E^{-}$) is called even (resp. odd) Dedekind symbol. 
In \cite{shin98}, it was shown that one can recover Dedekind symbol $E^{\epsilon}$ of weight $w$ from any function $f^{\epsilon}$ with reciprocity properties, up to constant multiple of $G_{w}(h, k)=\gcd(h, k)^{w}.$

\begin{comment}
\begin{definition}
Let $w$ be a positive even number and $\cc[h,k]_{w}$ be the space of homogeneous polynomial of degree $w$. 
We use the following notations: 
\begin{align*}
\uu_{w}&:=\{g\in \cc[h, k]_{w}:g(h+k, k)+g(h, h+k)=g(h,k), g(1, 1)=0\} \\
\uu_{w}^{-}&:=\{g\in \uu_{w}:g(h,-k)=-g(h,k)\} \\
\uu_{w}^{+}&:=\{g\in \uu_{w}:g(h, -k)=g(h,k)\} \\
\ee_{w}&:=\{E:E \text{ is a Dedekind symbol of weight }w\text{ s.t. }E(h, k)-E(k, -h)\in \cc[h, k]_{w}\} \\
\ee_{w}^{-}&:=\ee_{w}\cap\ww_{w}^{+}\\
\ee_{w}^{+}&:=\ee_{w}\cap \ww_{w}^{-} \\
\end{align*}
\end{definition}
\end{comment}
%%Obviously,  $\ee_{w}=\ee_{w}^{+}\oplus\ee_{w}^{-}$ and $\uu_{w}=\uu_{w}^{+}\oplus\uu_{w}^{-}$. 
For  $f\in S_{w+2}$ and $(h, k)\in \zz^{+}\times \zz$, define $E_{f}$ by $$ E_{f}(h, k)=\int_{k/h}^{i\infty}f(z)(hz-k)^{w}dz.$$
Furthermore we define $E_{f}^{-}$ and $E_{f}^{+}$, respectively, by
$$
E_{f}^{\pm}(h, k)=\frac{1}{2}(E_{f}(h, k)\pm E_{f}(h, -k)).
$$
Then it is shown that $E_{f}$ is a Dedekind symbol of weight $w$ with polynomial reciprocity function, so this defines the map $\alpha_{w+2}^{\pm}:S_{w+2}\to \ee_{w}^{\pm}$
with $\alpha_{w+2}^{\pm}(f)=E_{f}^{\pm}$. 

Also, define a map $\beta_{w}:\ee_{w}\to \uu_{w}$ which sends Dedekind symbol to its reciprocity function, 
\begin{align*}
\beta_{w}(E)(h, k)&:=E(h, k)-E(k, -h)\\
\beta_{w}^{\pm}(E)(h, k)&:=\frac{1}{2}(\beta_{w}(E)(h, k)\pm \beta_{w}(E)(h, -k)).
\end{align*}
 In case of Dedekind symbol $E_{f}$ associate with a cusp form $f$, we can check that
\begin{align*}
\beta_{w}(E_{f})(h, k)=\int_{0}^{i\infty}f(z)(hz-k)^{w}dz
\end{align*}
where RHS is a homogenized period polynomial of $f$. We will denote this map as $R_{w+2}$ and $R_{w+2}^{\pm}:=\beta_{w}^{\pm}\circ \alpha_{w+2}^{\pm}$, similarly.

Fukuhara showed that there is a one-to-one correspondence among the spaces $S_{w+2}, \ee_{w}^{\pm}$ and $\uu_{w}^{\pm}$  (see \cite{shin07}, \cite{shin06hec},  \cite{shin98}).

\begin{theorem}[Fukuhara]
The following diagram commutes : 
\begin{center}
\begin{tikzcd}
& & S_{w+2} \arrow[lldd, "\alpha_{w+2}^{\pm}"] \arrow[rrdd, "R_{w+2}^{\pm}"] & & \\
& & & & \\
\ee_{w}^{\pm} \arrow[rrrr, "\beta_{w}^{\pm}", pos=0.7, crossing over] & & & & \uu_{w}^{\pm} \end{tikzcd}
\end{center}
$\alpha_{w+2}^{-}:S_{w+2}\to \ee_{w}^{-}$ is an isomorphism and $\alpha_{w+2}^{+}:S_{w+2}\to \ee_{w}^{+}$ is a monomorphism s.t. the image $\alpha_{w+2}^{+}(S_{w+2})$ is the subspace of $\ee_{w}^{+}$ of codimension 1 where $\alpha_{w+2}^{+}(S_{w+2}), G_{w}$ span $\ee_{w}^{+}$.
Also, $\beta_{w}^{-}:\ee_{w}^{-}\to \uu_{w}^{-}$ is an isomorphism, and $\beta_{w}^{+}:\ee_{w}^{+}\to \uu_{w}^{+}$ is an epimorphism s.t. $\ker\beta_{w}^{+}$ is one dimensional subspace of $\ee_{w}^{+}$ spanned by $G_{w}$. 
\end{theorem}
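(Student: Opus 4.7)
The argument has three stages: well-definedness and commutativity of the diagram, transfer to the classical period polynomial picture for injectivity of $\alpha_{w+2}^{\pm}$ via $R_{w+2}^{\pm}$, and the main technical step of reconstructing a Dedekind symbol from its reciprocity function. For the first stage I would verify that $\alpha_{w+2}^{\pm}$ is well-defined directly from the integral formula: the substitution $z \mapsto z - 1$ together with $f(z+1) = f(z)$ gives periodicity $E_{f}(h, k+h) = E_{f}(h, k)$, while $z \mapsto -1/z$ combined with $f(-1/z) = z^{w+2} f(z)$ yields
\[
E_{f}(k, -h) = -\int_{0}^{k/h} f(z)(hz - k)^{w}\,dz,
\]
so $\beta_{w}(E_{f})(h, k) = \int_{0}^{i\infty} f(z)(hz - k)^{w}\,dz = r_{f}(h, k)$. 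Homogeneity and parity are immediate, so $\alpha_{w+2}^{\pm}(f) \in \ee_{w}^{\pm}$ and the diagram commutes by the definition $R_{w+2}^{\pm} = \beta_{w}^{\pm} \circ \alpha_{w+2}^{\pm}$.

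For the second stage I would identify $\uu_{w}^{\pm}$ with the Eichler-Shimura-Zagier space $W_{w}^{\pm}$ by a change of variables, so that the three-term relation $g(h+k, k) + g(h, h+k) = g(h, k)$ translates into $P + P|U + P|U^{2} = 0$ and the normalization $g(1, 1) = 0$ cuts out the cuspidal part. Under this identification $R_{w+2}^{\pm}$ is precisely the classical period polynomial map, so Eichler-Shimura-Zagier forces $R_{w+2}^{-}$ to be an isomorphism and $R_{w+2}^{+}$ to be a monomorphism with image of codimension one.

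The heart of the theorem is the surjectivity of $\beta_{w}^{\pm}$: given $g \in \uu_{w}^{\pm}$ I would construct $E \in \ee_{w}^{\pm}$ with $\beta_{w}(E) = g$ by iterating the Euclidean algorithm. The required relation $E(h, k) = g(h, k) + E(k, -h)$ combined with periodicity $E(h, k+h) = E(h, k)$ reduces any $(h, k) \in \zz^{+} \times \zz$ step by step along Euclidean divisions $k = qh + r$ to a base pair $(d, 0)$ with $d = \gcd(h, k)$, and the three-term relation in $\uu_{w}$ is exactly the cocycle identity needed for the output to be independent of the reduction path. The remaining freedom is the value $E(d, 0)$, which by homogeneity is a scalar multiple of $d^{w} = G_{w}(d, 0)$: the odd parity condition forces it to vanish, so $\beta_{w}^{-}$ is an isomorphism, while in the even case this degree of freedom produces a one-dimensional kernel spanned by $G_{w}$, consistent with the direct check $\beta_{w}(G_{w})(h,k) = \gcd(h, k)^{w} - \gcd(k, -h)^{w} = 0$.

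Finally, $\alpha_{w+2}^{-} = (\beta_{w}^{-})^{-1} \circ R_{w+2}^{-}$ is an isomorphism, and pulling the codimension-one image of $R_{w+2}^{+}$ back through the surjection $\beta_{w}^{+}$ shows that $\alpha_{w+2}^{+}(S_{w+2})$ has codimension one in $\ee_{w}^{+}$, complemented by $G_{w} \in \ker \beta_{w}^{+}$. The main obstacle will be the well-definedness of the Euclidean reconstruction — verifying that every reduction path produces the same value — which is exactly where the three-term relation in $\uu_{w}$ must be invoked in full generality, and is the cocycle mechanism underlying the Eichler cohomology interpretation of period polynomials.
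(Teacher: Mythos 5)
The paper never proves this statement: it is quoted as background from Fukuhara's articles (\cite{shin98}, \cite{shin04pol}, \cite{shin06hec}, \cite{shin07}), so there is no in-paper argument to compare yours against. Your strategy is nonetheless the right one and is essentially Fukuhara's: the integral manipulations in your first stage are correct, the identification of $\uu_{w}^{\pm}$ with $W_{w}^{\pm}$ is exactly how the Eichler--Shimura--Zagier input enters, and the Euclidean reconstruction of $E$ from its reciprocity function --- with the three-term relation guaranteeing path-independence and $E(d,0)=c\,d^{w}$ as the only residual freedom --- is the key lemma of Fukuhara's work on Dedekind symbols with polynomial reciprocity laws.

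There is, however, a genuine gap in your final dimension count for the even case, and it is internally visible in your own write-up. In your second stage you conclude (correctly) that $R_{w+2}^{+}(S_{w+2})$ has codimension $1$ in $\uu_{w}^{+}$, the missing direction being $h^{w}-k^{w}$. In your third stage you show (correctly) that $\beta_{w}^{+}$ is surjective with one-dimensional kernel $\cc G_{w}$. But then the preimage $(\beta_{w}^{+})^{-1}\bigl(R_{w+2}^{+}(S_{w+2})\bigr)=\alpha_{w+2}^{+}(S_{w+2})\oplus\cc G_{w}$ already has codimension $1$ in $\ee_{w}^{+}$, so $\alpha_{w+2}^{+}(S_{w+2})$ itself has codimension $2$, not $1$. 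The second complementary direction is any symbol with reciprocity function $h^{w}-k^{w}$, for instance $F_{w}(h,k)=h^{w}$, which lies in $\ee_{w}^{+}$ but not in $\alpha_{w+2}^{+}(S_{w+2})+\cc G_{w}$, precisely because Eichler--Shimura--Zagier excludes $X^{w}-Y^{w}$ from $r^{+}(S_{w+2})$. Your closing sentence asserting codimension $1$ therefore does not follow from your own intermediate steps. Be aware that the statement as printed shares this defect (it omits $F_{w}$ from the spanning set) and is contradicted by the paper's own Theorem 1, item 5, where $Q_{w+2}^{+}=\Psi_{w}^{+}\alpha_{w+2}^{+}$ is asserted to have image of codimension $2$, complemented by $\Psi_{w}(F_{w})$ and $\Psi_{w}(G_{w})$; since $\Psi_{w}^{+}$ is an isomorphism, this forces codimension $2$ for $\alpha_{w+2}^{+}$ as well. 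A correct proof must either add $F_{w}$ to the spanning set in the even case or confine the codimension-one claim to the odd/cuspidal part.
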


In \cite{shin06hec}, Fukuhara defined Hecke operators $T_{n}^{\infty}$ on the space of Dedekind symbols which are compatible with those on the space of  modular forms. 

\begin{theorem}[Fukuhara]

Let $E\in\ee_{w}$ be a weight $w$ Dedekind symbol. Then Hecke operators $T_{n}^{\infty}$ on $\ee_{w}$ which are defined by
$$
(T_{n}^{\infty}E)(h, k):=\sum_{\gamma\in\Gamma_{1}\backslash M_{n}}(E|\gamma)(h, k)=\sum_{ad=n, d>0}\sum_{b=0}^{d-1}E(dh, ak+bh)
$$
preserves $\ee_{w}^{\pm}$ and compatible with Hecke operators on $S_{w+2}$, i.e. 
$$
\alpha_{w+2}(T_{n}f) = T_{n}^{\infty} \alpha_{w+2}(f)
$$
for any $f\in S_{w+2}$. 

\end{theorem}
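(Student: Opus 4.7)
My plan is to verify two things: that $T_{n}^{\infty}$ preserves $\ee_{w}^{\pm}$, and that $\alpha_{w+2}(T_{n}f)=T_{n}^{\infty}\alpha_{w+2}(f)$ for every cusp form $f$.

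For the intertwining, I would begin with $E_{T_{n}f}(h,k)=\int_{k/h}^{i\infty}(T_{n}f)(z)(hz-k)^{w}\,dz$ and expand $T_{n}f$ over the standard upper-triangular coset representatives $\gamma_{a,b,d}=\sm{a&b\\0&d}$ with $ad=n$, $a,d>0$, $b$ running modulo $d$. In each summand, the substitution $u=(az+b)/d$ moves the lower limit to $(ak+bh)/(dh)$, gives $dz=(d/a)\,du$, and turns $(hz-k)^{w}$ into $a^{-w}(dh\,u-(ak+bh))^{w}$. The normalization $n^{w+1}d^{-(w+2)}$ arising from the weight-$(w+2)$ slash then combines with the Jacobian $d/a$ to give $(n/ad)^{w+1}=1$, so each summand equals $E_{f}(dh,ak+bh)$ and the full sum matches $(T_{n}^{\infty}E_{f})(h,k)$.

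For preservation of $\ee_{w}^{\pm}$, I would verify each Dedekind-symbol axiom for $T_{n}^{\infty}E$. Homogeneity is immediate. The parity $(T_{n}^{\infty}E)(h,-k)=\epsilon(T_{n}^{\infty}E)(h,k)$ reduces, via the parity of $E$ and the substitution $b\mapsto d-b$ on $\{0,\dots,d-1\}$, to an identity of permuted sums, with any $dh$-shift absorbed by the mod-$dh$ periodicity of $E$. The periodicity $(T_{n}^{\infty}E)(h,k+h)=(T_{n}^{\infty}E)(h,k)$ reduces similarly to $\sum_{b}E(dh,ak+(a+b)h)=\sum_{b}E(dh,ak+bh)$, which holds because $b\mapsto a+b$ permutes $\zz/d\zz$.

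The main obstacle is the polynomial reciprocity axiom, and here I would invoke the Choie-Zagier identity from Theorem~\ref{period}, namely
$$
T_{n}^{\infty}(I-S)\;=\;(I-S)\widetilde{T}_{n}+(I-T)Y_{n}\qquad\text{in the group algebra }\RR_{n}.
$$
Applying both sides on the right to $E$ via the slash action, the term $(I-T)Y_{n}$ annihilates $E$ because Dedekind symbols satisfy $E\bigm|T=E$; hence $(T_{n}^{\infty}E)\bigm|(I-S)=\bigl(E\bigm|(I-S)\bigr)\bigm|\widetilde{T}_{n}=\beta_{w}(E)\bigm|\widetilde{T}_{n}$. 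Since $\beta_{w}(E)\in\cc[h,k]_{w}$ is already polynomial and $\widetilde{T}_{n}$ preserves the polynomial space $V_{w}=\cc[h,k]_{w}$, the right-hand side is again a homogeneous polynomial of degree $w$. This gives the desired reciprocity, and together with the parity check proves $T_{n}^{\infty}(\ee_{w}^{\pm})\subseteq\ee_{w}^{\pm}$.
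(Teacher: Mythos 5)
The paper does not actually prove this statement: it is quoted as a known theorem of Fukuhara, with the proof deferred to the cited reference \cite{shin06hec}. So there is no in-paper argument to compare against; judged on its own, your proof is correct and essentially complete. The unfolding computation for the intertwining is right: the substitution $u=(az+b)/d$ sends the endpoint $k/h$ to $(ak+bh)/(dh)$, and the factor $n^{w+1}d^{-(w+2)}\cdot a^{-w}\cdot(d/a)=(n/ad)^{w+1}=1$ does make each summand exactly $E_f(dh,ak+bh)$. The homogeneity, periodicity and parity checks via the permutations $b\mapsto a+b$ and $b\mapsto d-b$ of $\zz/d\zz$ (using periodicity of $E(dh,\cdot)$ modulo $dh$) are all sound. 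For the reciprocity axiom, your use of the Choie--Zagier identity is the right idea and works: since $E|(I-T)=0$, the representative ambiguity of $T_n^{\infty}$ in $((T-I)\RR_n)\backslash\RR_n$ and the term $(I-T)Y_n$ both disappear, leaving $(T_n^{\infty}E)|(I-S)=\beta_w(E)|\widetilde{T}_n$, which is a homogeneous polynomial of degree $w$. Two small points you should make explicit if you write this up: (i) you need to identify the slash action on Dedekind symbols, $(E|\gamma)(h,k)=E(ck+dh,ak+bh)$, with the weight $-w$ action used in Theorem~\ref{period} (this is exactly the dictionary $\Psi_w$ provides), and check that $E|S(h,k)=E(k,-h)$ so that $E|(I-S)=\beta_w(E)$; and (ii) the formal group-algebra manipulation evaluates $E$ at pairs whose first entry may fail to be positive for some $(h,k)$, so one should either extend $E$ by homogeneity (using that $w$ is even) or note that the resulting polynomial identity need only be verified on a Zariski-dense set of $(h,k)\in\zz^{+}\times\zz^{+}$. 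Neither issue is a gap in substance. For comparison, Fukuhara's own proof proceeds by exhibiting an explicit formula for the reciprocity function of $T_n^{\infty}E$ in terms of that of $E$; your route through the cocycle identity of Theorem~\ref{period} is shorter given what this paper already quotes, at the cost of relying on that black box.
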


%%%%%%%%%%% Quantum modular forms %%%%%%%%%%%%%

\subsection{Quantum modular forms}

Quantum modular forms were first defined by Zagier in his paper \cite{za10}. 
They are functions defined on $\qq$ with modular properties, which are slightly different from usual modular forms. 
Recall the definition of quantum modular forms in \cite{choi16}.

\begin{definition}
Let $N$ be a positive integer, $k\in \frac{1}{2}\zz$ and $\chi$ be a multiplier system on $\Gamma_{0}(N)$. 
Then a function $f:\qq\to\cc$ is a quantum modular form of weight $k$ and a multiplier system $\chi$ on $\Gamma_{0}(N)$  if it satisfies the modular relation
$$
f(x)-(f|_{k,\chi}\gamma)(x)=h_{\gamma}(x)
$$
for all $\gamma\in \Gamma_{0}(N)$ where $$ (f|_{k, \chi}\gamma)(x)=\chi(\gamma)^{-1}(cx+d)^{-k}f\left(\frac{ax+b}{cx+d}\right), \quad \gamma=\begin{pmatrix} a&b \\ c&d\end{pmatrix}\in \Gamma_{0}(N)$$ and $h_{\gamma}$  can be extended smoothly on $\rr$ except finitely many points $S\subset \qq$. 
Let $\QQ_{k}(\Gamma_{0}(N), \chi)$ be the space of weight $k$ quantum modular forms on $\Gamma_{0}(N)$. 
Also, we denote $\QQ_{k}(\SL_{2}(\zz), id):=\QQ_{k}$ for short.
\end{definition}
$h_{\gamma}$ is a 1-cocycle, i.e. $h_{\gamma_{1}\gamma_{2}}=h_{\gamma_{1}}|_{k, \chi}\gamma_{2}+h_{\gamma_{2}}$. 
Let $\QQ_{k}^{p}\subset \QQ_{k}$ be the subspace containing quantum modular forms of weight $k$ with $h_{T}(x)\equiv 0$, i.e. $f(x+1)=f(x)$. 
Then $h_{U}=h_{TS}=h_{S}+h_{T}|_{k}S=h_{S}$, so $h_{S}$ became a period function. 
For $f\in \QQ_{k}^{p}$, we call $h_{S}(x)=f(x)-x^{-k}f\left(-\frac{1}{x}\right)$ as the \emph{period function of $f$}. 

For example, Zagier found that we can associate a quantum modular form to a certain Maass wave form. More precisely, let's recall one of the $q$-hypergeometric function from Ramanujan's "Lost" Notebook : 
$$
\sigma(q)=\sum_{n=0}^{\infty}\frac{q^{n(n+1)/2}}{(1+q)(1+q^{2})\cdots(1+q^{n})}=1+q-q^{2}+2q^{3}-2q^{4}+q^{5}+q^{7}+\cdots.
$$
Now define coefficients $\{T(n)\}_{n\in 24\zz+1}$ by $q\sigma(q^{24})=\sum_{n\geq 0}T(n)q^{n}$.    In \cite{co88}, Cohen showed that these are related to the certain Maass wave form given by 
$$
u(z)=\sqrt{y}\sum_{n\in 24\zz+1}T(n)K_{0}(2\pi |n|y/24)e^{2\pi i n x/24}, \quad z = x+iy
$$
which is a Maass wave form of eigenvalue $1/4$ on a congruence subgroup $\Gamma_{0}(2)$. Also, in \cite{an86} Andrews proved the following $q$-series identity 
$$
\sigma(q)=1+\sum_{n=0}^{\infty}(-1)^{n}q^{n+1}(1-q)(1-q^{2})\cdots(1-q^{n}). 
$$
This implies that $\sigma(q)$ also makes sense whenever $q$ is a root of unity because the series only contains finite sum in that case. Now we can define $f:\qq\to \cc$ as $$f(x)=q^{1/24}\sigma(q)\,\,\,\,(x\in \qq, q=e^{2\pi i x}).$$ Zagier proved that this function satisfies \emph{quantum} modular properties : 
\begin{prop}
The above function $f$ satisfies 
$$
f(x+1)=\zeta_{24}f(x),\,\,\,\,\frac{1}{|2x+1|}f\left(\frac{x}{2x+1}\right)=\zeta_{24}f(x)+h(x)
$$
where $\zeta_{24}=e^{2\pi i /24}$ and $h:\rr\to \cc$ is $C^{\infty}$ on $\rr$ and real-analytic except at $x=-1/2$. 
\end{prop}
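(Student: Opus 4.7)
The first equation is a one-line verification. Writing $q = e^{2\pi i x}$, the substitution $x\mapsto x+1$ fixes $q$ but sends $q^{1/24} = e^{2\pi i x/24}$ to $\zeta_{24}q^{1/24}$. By the Andrews identity recalled just above the proposition, the series
$$\sigma(q) = 1 + \sum_{n=0}^{\infty}(-1)^{n}q^{n+1}(1-q)(1-q^{2})\cdots(1-q^{n})$$
terminates whenever $q$ is a root of unity (one factor $1-q^{k}$ vanishes), so $\sigma$ really depends only on $q$ on $\qq$, and $f(x+1) = \zeta_{24}f(x)$ follows at once.

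For the second identity the plan is to build a Mordell/Eichler integral representation of the modular defect, using Cohen's link between the coefficients $T(n)$ and the Maass form $u(z)$ of eigenvalue $1/4$ on $\Gamma_{0}(2)$. Concretely, I would (i) extend $f$ to $\hh$ by $\widetilde f(z) = e^{\pi i z/12}\sigma(e^{2\pi i z})$, with $\sigma$ now given by its convergent $|q|<1$ series; (ii) exploit the $\Gamma_{0}(2)$-invariance of $u$ under $\gamma = \sm{1&0\\2&1}$ to express $\widetilde f(z) - (\widetilde f|_{1,\chi}\gamma)(z)$ as a Mordell-type integral with Gaussian kernel along the geodesic from $-1/2 = \gamma^{-1}(i\infty)$ to $i\infty$; (iii) let $z$ approach a real point $x\in\rr$ from the upper half-plane and identify the limiting integral with the desired $h(x)$.

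The main obstacle is precisely this last step. One must verify both that the limiting integral really equals $\frac{1}{|2x+1|}f\bigl(x/(2x+1)\bigr) - \zeta_{24}f(x)$ for every $x\in\qq$, and that it has the claimed regularity on all of $\rr$. The $C^\infty$ property follows from the Gaussian decay of the integrand after rewriting it in Mordell form; real-analyticity on $\rr\setminus\{-1/2\}$ reflects the fact that the only non-analytic contribution comes from the endpoint $-1/2$, the $\gamma^{-1}$-image of the cusp $i\infty$ where the defining $q$-series degenerates. This is essentially Zagier's construction from \cite{za10}, and the technical crux is controlling the error in the strange identity $\sigma(q)\approx -2\sum_{n\geq 1}n\chi_{12}(n)q^{(n^{2}-1)/24}$ as $q$ approaches the real axis, using the weight $3/2$ modularity of the right-hand side together with the connection to Cohen's Maass form.
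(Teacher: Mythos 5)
The paper itself gives no proof of this Proposition: it is quoted from Zagier \cite{za10} (with Cohen \cite{co88} supplying the Maass form), so there is no internal argument to compare yours against. Your verification of the first identity is correct and complete: periodicity of $q=e^{2\pi ix}$ plus Andrews' terminating series shows $\sigma$ depends only on $q$ at roots of unity, and the prefactor $q^{1/24}$ contributes exactly $\zeta_{24}$.

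For the second identity, however, what you have written is a plan rather than a proof, and you say so yourself: steps (ii) and (iii) --- identifying the limiting integral with $\frac{1}{|2x+1|}f\bigl(\frac{x}{2x+1}\bigr)-\zeta_{24}f(x)$ on $\qq$ and establishing the $C^{\infty}$/real-analytic regularity of $h$ --- are precisely the content of the Proposition, and they are left unexecuted. There is also a conceptual misdirection in your final paragraph: the ``strange identity'' $\sum_{n\ge1}n\chi_{12}(n)q^{(n^{2}-1)/24}$ and its weight $3/2$ modularity belong to the Kontsevich--Zagier/Dedekind-eta example, not to $\sigma$. Here $q\sigma(q^{24})=\sum T(n)q^{n}$ has coefficients given by a Hecke character of $\qq(\sqrt{6})$, and the associated object is Cohen's \emph{Maass} form $u$ of eigenvalue $1/4$ on $\Gamma_{0}(2)$; there is no Gaussian/Mordell kernel in this example. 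The correct mechanism for producing $h$ is the Lewis--Zagier period integral of $u$ along the geodesic from $\gamma^{-1}(i\infty)=-1/2$ to $i\infty$ (an integral of the closed Green's form attached to $u$ and the eigenvalue-$1/4$ kernel); rotating that contour off the real axis is what yields real-analyticity on $\rr\setminus\{-1/2\}$, and a separate asymptotic matching at the rationals identifies its boundary value with the cocycle. Without carrying out that construction, the existence and smoothness of $h$ --- the entire nontrivial content of the statement --- remains unproved.
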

In this case, the slash operator is given by 
$$
(f|\gamma)(x)=\chi(\gamma)^{-1}|cx+d|^{-k}f\left(\frac{ax+b}{cx+d}\right), \qquad\gamma=\m{a&b\\c&d}\in \GL_{2}^{+}(\qq).
$$
This is a weight 1 quantum modular form on $\Gamma_{0}(2)$ with multiplier system $\chi$ defined as
$\chi(T) = \chi(R) = \zeta_{24}$ where $$
T = \begin{pmatrix} 1&1\\0&1\end{pmatrix}, \qquad R = \begin{pmatrix} 1 & 0 \\ 2& 1 \end{pmatrix}. 
$$
Note that $T, R$ generates $\Gamma_{0}(2)$. From $-I = (RT^{-1})^{2}$, we have $\chi(-I) = 1$. 

\emph{Remark.} In \cite{za10}, there is a minor errata : we have to take an absolute value on $2x+1$. 

%%%%%%%%%%%%%%% Result %%%%%%%%%%%%%%%%

\section{Quantum modular forms associated to Dedekind symbols with polynomial reciprocity law}

Let $E\in \ee_{w}$ be a Dedekind symbol of weight $w$ with polynomial reciprocity law where $w$ is a positive even integer. 
We can define a quantum modular form $\widetilde{f}_{E}$ associated to $E$. 

\begin{definition}
Let $E\in \ee_{w}$. Define a map $\Psi_{w}:\ee_{w}\to \QQ_{-w}$ by $\Psi_{w}(E)=\widetilde{f}_{E}$ where 
$$
\widetilde{f}_{E}\left(\frac{k}{h}\right)=h^{-w}E(h, k).
$$ 
\end{definition}
It is clear that $\Psi_{w}$ is injective map. 

\begin{prop}
$\widetilde{f}_{E}$ is a well-defined quantum modular form of weight $-w$ with polynomial period function on $\mathrm{SL}_{2}(\mathbb{Z})$. 
\end{prop}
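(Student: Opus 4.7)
The plan is to verify the three defining ingredients of a quantum modular form in $\QQ_{poly,-w}^{p}$: well-definedness on $\qq$, invariance under $T$, and a polynomial $S$-cocycle. Throughout I will use only the four axioms of a Dedekind symbol together with the hypothesis that the reciprocity function lies in $\cc[h,k]_{w}$, and the fact that $\SL_{2}(\zz)$ is generated by $S$ and $T$.

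First, I would check well-definedness. Given $k/h=k'/h'$ with $h,h'\in\zz^{+}$, write $b/a$ for the fraction in lowest terms, so $(h,k)=d(a,b)$ and $(h',k')=d'(a,b)$ for positive integers $d,d'$. Homogeneity (axiom 4) gives $E(h,k)=d^{w}E(a,b)$ and $E(h',k')=(d')^{w}E(a,b)$, whence $h^{-w}E(h,k)=a^{-w}E(a,b)=(h')^{-w}E(h',k')$. Next, for periodicity, writing $x=k/h$ yields $\widetilde{f}_{E}(x+1)=h^{-w}E(h,k+h)=h^{-w}E(h,k)=\widetilde{f}_{E}(x)$ by axiom 1, so $\widetilde{f}_{E}|_{-w}T=\widetilde{f}_{E}$, i.e.\ $h_{T}\equiv 0$.

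The main computation is the action of $S$. Given $x=k/h$ with $h>0$, I need a clean formula for $\widetilde{f}_{E}(-h/k)$. If $k>0$ then $-h/k=(-h)/k$, giving $\widetilde{f}_{E}(-h/k)=k^{-w}E(k,-h)$. If $k<0$ then $-h/k=h/(-k)$ with $-k>0$, giving $\widetilde{f}_{E}(-h/k)=(-k)^{-w}E(-k,h)$; now axiom 4 with $c=-1$ and the parity of $w$ convert this to $k^{-w}E(k,-h)$ again. Hence, uniformly,
\[
\widetilde{f}_{E}(x)-x^{w}\widetilde{f}_{E}(-1/x)=h^{-w}E(h,k)-(k/h)^{w}k^{-w}E(k,-h)=h^{-w}\bigl(E(h,k)-E(k,-h)\bigr).
\]
By the defining property of $\ee_{w}$, the bracket is a homogeneous polynomial $P(h,k)$ of degree $w$, so the right-hand side equals $P(1,x)$, a polynomial in $x$ of degree at most $w$. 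This simultaneously proves the $S$-relation and shows $h_{S}$ is a polynomial; the case $k=0$ (i.e.\ $x=0$) is handled separately since $S$ sends $0$ to $\infty\notin\qq$, and is allowed among the finitely many exceptional points.

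Finally, to verify the quantum modularity condition for every $\gamma\in\SL_{2}(\zz)$, I invoke the cocycle identity $h_{\gamma_{1}\gamma_{2}}=h_{\gamma_{1}}|_{-w}\gamma_{2}+h_{\gamma_{2}}$ and induct on the word length of $\gamma$ in the generators $S,T$. Since $h_{T}=0$ and $h_{S}$ is a polynomial of degree $\leq w$, and since the slash of a degree-$\leq w$ polynomial by any element of $\SL_{2}(\zz)$ in weight $-w$ is again a polynomial of degree $\leq w$ (the factor $(cx+d)^{w}$ clears the denominator), each $h_{\gamma}$ is itself a polynomial, hence smooth on $\rr$. The only step requiring care is the case analysis on the sign of $k$ in the $S$-computation together with the bookkeeping of $x=0$, which I expect to be the main (but mild) obstacle; everything else is a direct unwinding of the Dedekind symbol axioms.
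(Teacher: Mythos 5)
Your proof is correct and follows essentially the same route as the paper's: unwind the definition of $\widetilde{f}_{E}$ via the Dedekind-symbol axioms to obtain $T$-invariance and the polynomial $S$-cocycle $h^{-w}\bigl(E(h,k)-E(k,-h)\bigr)$. The paper's version is terser---it omits the lowest-terms check for well-definedness, the sign analysis for $k<0$, and the reduction to the generators $S,T$---so your additional bookkeeping only makes the same argument more careful.
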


\begin{proof}
Since $E$ is a Dedekind symbol of weight $w$, $\widetilde{f}_{E}$ is well-defined. 
For $E\in \ee_{w}$, 
$$ E(h, k)-E(k, -h)=\sum_{i=0}^{w}\alpha_{i}h^{i}k^{w-i}$$ for some $\alpha_{i}\in \mathbb{C}$. 
Then for $x=k/h\in \qq$, we have
$$\widetilde{f}_{E}(x+1)=\frac{1}{h^{w}}E(h, h+k)=\frac{1}{h^{w}}E(h, k)=\widetilde{f}_{E}(x)$$
and
\begin{align*}
\widetilde{f}_{E}(x)-x^{w}\widetilde{f}_{E}\left(-\frac{1}{x}\right)&=\frac{1}{h^{w}}E(h, k)-\left(\frac{k}{h}\right)^{w}\frac{1}{k^{w}}E(k, -h)=\frac{1}{h^{w}}(E(h, k)-E(k, -h))\\
&=\frac{1}{h^{w}}R(h, k)=\sum_{i=0}^{w}\alpha_{i}\left(\frac{k}{h}\right)^{w-i}=\sum_{i=0}^{w}\alpha_{i}x^{w-i}.
\end{align*}
Thus $\widetilde{f}_{E}$ is a quantum modular form of weight $-w$ with a trivial multiplier system. \qed
\end{proof}

Let  $\QQ_{poly,-w}^{p}$ be the space of periodic quantum modular forms of weight $-w$  with the trivial multiplier system and a polynomial period function, i.e. 
\begin{align*}
\QQ_{poly, -w}^{p}:=\left\{\widetilde{f}\in \QQ_{-w}:\widetilde{f}(x)=\widetilde{f}(x+1), \widetilde{f}(x)-x^{w}\widetilde{f}\left(-\frac{1}{x}\right)=\sum_{j=0}^{w}c_{j}x^{j},c_{j}\in\cc\right\}
\end{align*}
 and $\QQ_{\ee, -w}:=\Psi_{w}(\ee_{w})$ be image of $\Psi_{w}$. Note that $\QQ_{\ee, -w}\subseteq \QQ_{poly, -w}^{p}$. 
By composing this map with $\alpha_{w+2}$, consider  $$Q_{w+2}:=\Psi_{w}\circ\alpha_{w+2}:S_{w+2}\to \QQ_{\ee, -w}.$$

Note that $Q_{w+2}(f)$ coincides with the Eichler integral, 
$$
Q_{w+2}(f)(x)=\int_{x}^{i\infty}f(z)(x-z)^{w}dz=\frac{w!}{(-2\pi i)^{w+1}}\sum_{n\geq 1}\frac{a_{n}}{n^{w+1}}e^{2\pi i n x}
$$
and its period function is same as the (1-variable) period polynomial $r_{f}(x)$.

Also, we define odd and even part of the map $\Psi_{w}$, 
\begin{align*}
\Psi_{w}^{\pm}(E)(h, k):=&\frac{1}{2}(\Psi_{w}(E)(h, k)\pm \Psi_{w}(E)(h, -k)) \\
Q_{w+2}^{\pm}:=&\Psi_{w}^{\pm}\alpha_{w+2}^{\pm}:S_{w+2}\to \QQ_{\ee, -w}^{\pm}
\end{align*}
where
\begin{align*}
\QQ_{\ee, -w}^{+}&=\{\widetilde{f}_{Q}\in \QQ_{\ee, -w}:\,\,\widetilde{f}_{Q}(x)=\widetilde{f}_{Q}(-x)\} \\
\QQ_{\ee, -w}^{-}&=\{\widetilde{f}_{Q}\in \QQ_{\ee, -w}:\,\,\widetilde{f}_{Q}(x)=-\widetilde{f}_{Q}(-x)\}.
\end{align*}

Now we will define  new maps $H_{w}, H_{w}^{\pm}:\QQ_{poly, -w}^{p}\to \uu_{w}$ and we will prove the Theorem 1.
\begin{definition}
For any $\widetilde{f}\in \QQ_{poly, -w}^{p}$, define
\begin{align*}
H_{w}(\widetilde{f})(h, k)&:=h^{w}\widetilde{f}\left(\frac{k}{h}\right)-k^{w}\widetilde{f}\left(-\frac{h}{k}\right) \\
H_{w}^{\pm}(\widetilde{f})(h, k)&:=\frac{1}{2}(H_{w}^{\pm}(\widetilde{f})(h, k)\pm H_{w}^{\pm}(\widetilde{f})(h, -k)).
\end{align*}
\end{definition}

\begin{proof}[Proof of the Theorem 1]
First, we know that $g(x)=h_{S}(x)=\widetilde{f}(x)-x^{w}\widetilde{f}\left(-\frac{1}{x}\right)$ is a polynomial in $x$. 
Then $H_{w}(\widetilde{f})(h, k)=h^{w}g\left(\frac{k}{h}\right)$ is homogeneous polynomial in $h, k$ which satisfies period relations, $H_{w}(f)+H_{w}(f)|S=H_{w}(f)+H_{w}(f)|U+H_{w}(f)|U^{2}=0$. 

For $1$, it is enough to check that $\beta_{w}=H_{w}\Psi_{w}$. (If we show this, then $H_{w}Q_{w+2}=H_{w}\Psi_{w}\alpha_{w+2}=\beta_{w}\alpha_{w+2}=R_{w+2}$, so everything commutes.) 
It can be shown by direct computation : for any $E\in \ee_{w}$, 
\begin{align*}
H_{w}\Psi_{w}(E)(h, k)&=H_{w}(h^{-w}E(h, k))=h^{w}h^{-w}E(h, k)-k^{w}k^{-w}E(k, -h)\\
&=E(h, k)-E(k, -h)=\beta_{w}(E)(h, k),
\end{align*}
so $H_{w}\Psi_{w}=\beta_{w}$. 

%%%%%%%%%%%%%%%% FIX THIS %%%%%%%%%%%%%%%%
To show $2$, first we will check that $H_{w}^{-}$ is injective. 
This directly follows from the following Lemma :
\begin{lemma}
\label{lem}
Let $\widetilde{f}:\qq\to\cc$ be a function satisfies 
\begin{align*}
\widetilde{f}(x)-\widetilde{f}(x+1)=0 \quad\dots\quad(1)\\
\widetilde{f}(x)-x^{w}\widetilde{f}\left(-\frac{1}{x}\right)=0\quad\dots\quad(2)
\end{align*}
for some even integer $w\geq 2$. Then there exists $c\in \cc$ s.t. $$\widetilde{f}\left(\frac{k}{h}\right)=c\left(\frac{\gcd(h, k)}{h}\right)^{w}\quad\dots\quad(*)$$ for any $h, k\in \zz, h> 0$. 
In particular, if $\widetilde{f}$ is odd function then $\widetilde{f}\equiv 0$. 
\end{lemma}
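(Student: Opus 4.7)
The plan is to transfer the problem from quantum modular forms back to Dedekind symbols and then apply Fukuhara's theorem directly.

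First I would define $E: \zz^{+}\times\zz\to\cc$ by $E(h,k):=h^{w}\widetilde{f}(k/h)$ and verify that this $E$ is a Dedekind symbol of weight $w$ with \emph{zero} reciprocity function (which, being zero, is in particular a polynomial). The periodicity axiom $E(h,k+h)=E(h,k)$ is immediate from relation $(1)$. Homogeneity $E(ch,ck)=c^{w}E(h,k)$ for $c\in\zz^{+}$ is also clear. The reciprocity is the key computation: using $(2)$ applied at $x=-h/k$ together with the fact that $w$ is even,
\begin{equation*}
E(k,-h)=k^{w}\widetilde{f}(-h/k)=k^{w}\cdot(-h/k)^{w}\widetilde{f}(k/h)=h^{w}\widetilde{f}(k/h)=E(h,k),
\end{equation*}
so $E(h,k)-E(k,-h)=0$ for all $h,k\in\zz^{+}$.

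Next, since the Dedekind symbol axioms include a parity assumption, I would split $\widetilde{f}=\widetilde{f}_{+}+\widetilde{f}_{-}$ into its even and odd parts in $x$, and check that each part still satisfies $(1)$ and $(2)$. For $(1)$ this uses the observation $\widetilde{f}(-x-1)=\widetilde{f}(-x)$, which follows by applying the original periodicity to the argument $-x-1$. For $(2)$ one applies the original relation at $-x$ and again uses that $w$ is even. Writing $E=E^{+}+E^{-}$ accordingly, one obtains $E^{\pm}\in\ee_{w}^{\pm}$, each with reciprocity function identically zero.

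Now Fukuhara's theorem (stated just above the lemma) finishes the job. Since $\beta_{w}^{-}:\ee_{w}^{-}\to\uu_{w}^{-}$ is an isomorphism, $E^{-}=0$, hence $\widetilde{f}_{-}\equiv 0$. Since $\ker\beta_{w}^{+}$ is spanned by $G_{w}(h,k)=\gcd(h,k)^{w}$, one has $E^{+}=c\,G_{w}$ for some $c\in\cc$. Therefore
\begin{equation*}
\widetilde{f}\left(\tfrac{k}{h}\right)=h^{-w}E(h,k)=c\,h^{-w}\gcd(h,k)^{w}=c\left(\tfrac{\gcd(h,k)}{h}\right)^{w},
\end{equation*}
proving $(*)$. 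If moreover $\widetilde{f}$ is odd, then $\widetilde{f}_{+}\equiv 0$ forces $c=0$ (alternatively, $\widetilde{f}(k/h)=c(\gcd(h,k)/h)^{w}$ is an even function of $k$, so oddness forces it to vanish), and combined with $\widetilde{f}_{-}\equiv 0$ we conclude $\widetilde{f}\equiv 0$.

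The only step that requires any care is the verification that the parity-decomposition of $\widetilde{f}$ preserves both structural identities; this is the main obstacle, but it is routine given the $w$-even hypothesis. After that, everything reduces to a direct invocation of Fukuhara's classification of Dedekind symbols with polynomial reciprocity.
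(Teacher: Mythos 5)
Your proof is correct, but it follows a genuinely different route from the paper's. The paper argues directly and elementarily, by induction on the denominator $h$ of $x=k/h$: periodicity $(1)$ reduces to representatives with $0\le k\le h_{0}$, and relation $(2)$ trades $k/(h_{0}+1)$ for $-(h_{0}+1)/k$, whose denominator $k\le h_{0}$ is strictly smaller, so the Euclidean algorithm terminates and gives $(*)$ with $c=\widetilde{f}(0)$; oddness then forces $c=0$ via $\widetilde{f}(-1)=\widetilde{f}(0)=\widetilde{f}(1)=-\widetilde{f}(-1)$. You instead pull the problem back to Dedekind symbols by setting $E(h,k)=h^{w}\widetilde{f}(k/h)$, check that $E$ has identically zero (hence polynomial) reciprocity function, split by parity so that $E^{\pm}\in\ee_{w}^{\pm}$, and quote Fukuhara's theorem: injectivity of $\beta_{w}^{-}$ kills $E^{-}$, and $\ker\beta_{w}^{+}=\cc G_{w}$ gives $E^{+}=cG_{w}$. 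The reduction is faithful --- your parity verifications are exactly the routine $w$-even checks you describe, and there is no circularity, since Fukuhara's theorem is a quoted prior result that does not depend on this lemma. The trade-off: the paper's induction is self-contained, and it is essentially the same uniqueness argument (a Dedekind symbol is determined by its reciprocity function up to a multiple of $G_{w}$) that underlies the part of Fukuhara's theorem you invoke, so your proof relocates rather than eliminates that computation; in exchange you get a shorter argument that explains conceptually where the exceptional solution $\left(\gcd(h,k)/h\right)^{w}$ comes from.
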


\begin{proof}
We use induction on $h$ where $x=k/h$. Let $\widetilde{f}(0)=c$. 
For any $k\in \zz$, since $\widetilde{f}(k)=\widetilde{f}(0)=c$ by (1) so we proved $(*)$ for $h=1$. 
Suppose $(*)$ holds for any $k/h\in \qq$ with $h\leq h_{0}$ and consider $x=k/(h_{0}+1)$. 
By (1), we can assume $0\leq k\leq h_{0}$. 
If $k=0$, $\widetilde{f}(x)=\widetilde{f}(0)=c$ and we are done. 
If not, by (2)
$$
\widetilde{f}\left(\frac{k}{h_{0}+1}\right)=\left(\frac{k}{h_{0}+1}\right)^{w}\widetilde{f}\left(-\frac{h_{0}+1}{k}\right)
$$
and induction hypothesis gives
$$
\widetilde{f}\left(\frac{k}{h_{0}+1}\right)=\left(\frac{k}{h_{0}+1}\right)^{w}c\left(\frac{\gcd(k, h_{0}+1)}{k}\right)^{w}=c\left(\frac{\gcd(h_{0}+1, k)}{h_{0}+1}\right)^{w}.
$$
So $(*)$ holds for any $x\in \qq$. If $\widetilde{f}$ is odd function, $\widetilde{f}(-1)=\widetilde{f}(0)=\widetilde{f}(1)=-\widetilde{f}(-1)$, so $c=\widetilde{f}(0)=\widetilde{f}(-1)=0$ and $\widetilde{f}\equiv 0$. \qed
\end{proof}
Note that $\widetilde{f}\in \ker H_{w}^{-}$ exactly satisfies equations $(1)$ and $(2)$ in the Lemma \ref{lem}. 
Now suppose $\widetilde{f}\in \QQ_{poly, -w}^{p-}$, then $H_{w}^{-}(\widetilde{f})\in \uu_{w}^{-}$ so by Eichler-Shimura theory there exists $f\in S_{w+2}$ s.t. $R_{w+2}^{-}(f)=H_{w}^{-}(\widetilde{f})$. 
However, we have $R_{w+2}^{-}=H_{w}^{-}\Psi_{w}^{-}\alpha_{w+2}^{-}$ and injectivity of $H_{w}^{-}$ gives $\widetilde{f}=\Psi_{w}^{-}\alpha_{w+2}^{-}(f)$. So $\QQ_{poly, -w}^{p-}=\QQ_{\ee, -w}^{-}$. 

Then even case is similar. By the Lemma \ref{lem}, we have $\ker(H_{w}^{+})=\{c\Psi_{w}(G_{w}):c\in\cc\}$. For $\widetilde{f}\in\QQ_{poly, -w}^{+}$, $H_{w}^{+}(\widetilde{f})\in \uu_{w}^{+}$ and  Eichler-Shimura theory gives that 
$$
H_{w}^{+}(\widetilde{f})=a(h^{w}-k^{w})+R_{w+2}^{+}(f)=aH_{w}^{+}\Psi_{w}^{+}(F_{w})+R_{w+2}^{+}(f)
$$
for some $f\in S_{w+2}$ and $a\in \cc$. Since $R_{w+2}^{+}=H_{w}^{+}\Psi_{w}^{+}\alpha_{w+2}^{+}$ we get 
$$
H_{w}^{+}(\widetilde{f}-\Psi_{w}^{+}\alpha_{w+2}^{+}(f)-a\Psi_{w}^{+}(F_{w}))=0
$$
and
$$
\widetilde{f}-\Psi_{w}^{+}\alpha_{w+2}^{+}(f)-a\Psi_{w}^{+}(F_{w})=b\Psi_{w}^{+}(G_{w})
$$
which proves $\widetilde{f}\in \QQ_{\ee, -w}^{+}$, $\QQ_{poly, -w}^{p+}=\QQ_{\ee, -w}^{+}$. 
$\QQ_{\ee, -w}=\QQ_{poly, -w}^{p}$ immediately follows from $\QQ_{\ee, -w}=\QQ_{\ee, -w}^{+}\oplus \QQ_{\ee, -w}^{-}$ and $\QQ_{poly, -w}^{p}=\QQ_{poly, -w}^{p+}\oplus \QQ_{poly, -w}^{p-}$. 
%%%%%%%%%%%%%%%%%%%%%%%%%%%%%%%%%%%%%%

We know that $\Psi_{w}:\ee_{w}\to \QQ_{\ee, -w}$ is an isomorphism and $\Psi_{w}(\ee_{w}^{\pm})\subseteq \QQ_{\ee, -w}^{\pm}, \ee_{w}=\ee_{w}^{+}\oplus \ee_{w}^{-}, \QQ_{\ee, -w}=\QQ_{\ee, -w}^{+}\oplus \QQ_{\ee, -w}^{-}$, so we get $\Psi_{w}(\ee_{w}^{\pm})=\QQ_{\ee, -w}^{\pm}$ and $\Psi_{w}$ is an isomorphism between $\ee_{w}^{\pm}$ and $\QQ_{\ee, -w}^{\pm}$. 
Combining with the Theorem 5, we get the 3, 4, 5, 6. \qed
\end{proof}

\section{Hecke Operators on Quantum modular forms}

Now we define a Hecke operator on $\QQ_{\ee, -w}$ and show that it can  extends to  the space $\QQ_{-w}$. 
Actually, it is the same as an operator $T_{n}^{\infty}$ in the Theorem \ref{period}. 
\begin{definition} 
Define a Hecke operator $T_{n}^{\infty}$ on $\QQ_{\ee, -w}$ by 
$$
(T_{n}^{\infty}\widetilde{f})(x):=\sum_{ad=n, d>0}d^{w}\sum_{b=0}^{d-1}\widetilde{f}\left(\frac{ax+b}{d}\right).
$$
\end{definition}
\begin{prop}
\begin{enumerate}
\item The Hecke operator on $\QQ_{\ee,-w}$ is compatible with Hecke operator on $\ee_{w}$, i.e. the following diagram commutes
\begin{center}
\begin{tikzcd}
\ee_{w} \arrow[r, "T_{n}^{\infty}"] \arrow[d, "\Psi_{w}"] & \ee_{w}\arrow[d, "\Psi_{w}"] \\
\QQ_{\ee, -w}  \arrow[r, "T_{n}^{\infty}"] & \QQ_{\ee, -w} 
\end{tikzcd}
\end{center}
\item For any $f\in \QQ_{\ee, -w}, T_{n}^{\infty}f\in \QQ_{\ee, -w}$. 
\end{enumerate}
\end{prop}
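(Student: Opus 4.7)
The plan is to do part 1 by a direct unwinding of both Hecke operators on a fraction $x = k/h$ (with $h>0$), and then derive part 2 formally from part 1 together with Fukuhara's theorem that $T_n^\infty$ preserves $\ee_w$.

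For part 1, take $E \in \ee_w$ and let $\widetilde f = \Psi_w(E)$, so $\widetilde f(k/h) = h^{-w}E(h,k)$. On the one hand,
\begin{align*}
\Psi_w(T_n^\infty E)\!\left(\tfrac{k}{h}\right) = h^{-w}(T_n^\infty E)(h,k) = h^{-w}\sum_{\substack{ad=n\\ d>0}}\sum_{b=0}^{d-1} E(dh,\,ak+bh).
\end{align*}
On the other hand, using $\widetilde f\!\left(\frac{ak+bh}{dh}\right) = (dh)^{-w}E(dh,ak+bh)$, we get
\begin{align*}
(T_n^\infty \widetilde f)\!\left(\tfrac{k}{h}\right) = \sum_{\substack{ad=n\\ d>0}} d^w \sum_{b=0}^{d-1} \widetilde f\!\left(\tfrac{ak+bh}{dh}\right) = h^{-w}\sum_{\substack{ad=n\\ d>0}}\sum_{b=0}^{d-1} E(dh,\,ak+bh).
\end{align*}
The two expressions agree, giving $\Psi_w \circ T_n^\infty = T_n^\infty \circ \Psi_w$ on $\ee_w$. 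The only subtlety I would watch out for is that $\Psi_w(E)$ is defined on $\qq$ via the representation $x = k/h$ with $h>0$; the identity $\widetilde f((ak+bh)/(dh)) = (dh)^{-w}E(dh, ak+bh)$ uses $dh>0$, which is fine since $d,h>0$.

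For part 2, the argument is then essentially formal: given $\widetilde f \in \QQ_{\ee,-w}$, by the definition of $\QQ_{\ee,-w} = \Psi_w(\ee_w)$ there is $E \in \ee_w$ with $\Psi_w(E) = \widetilde f$; Fukuhara's theorem (Theorem~2.5 in the excerpt) gives $T_n^\infty E \in \ee_w$; and part 1 then gives $T_n^\infty \widetilde f = \Psi_w(T_n^\infty E) \in \Psi_w(\ee_w) = \QQ_{\ee,-w}$.

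I do not expect a genuine obstacle here — the whole content is in the bookkeeping of indices $(a,b,d)$ and the homogeneity of $E$ — but the step I would double-check is the matching of the $h^{-w}$ and $d^w$ factors under $\Psi_w$, since a single sign or exponent slip would break the compatibility. Once that is verified on one fraction $k/h$, both statements drop out immediately.
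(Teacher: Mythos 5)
Your proposal is correct and follows essentially the same route as the paper: the paper's proof is exactly this direct computation unwinding $T_n^\infty\widetilde f_E(k/h)$ via $\widetilde f_E((ak+bh)/(dh))=(dh)^{-w}E(dh,ak+bh)$ to match $h^{-w}(T_n^\infty E)(h,k)$, and it likewise deduces part 2 from the compatibility in part 1 (your explicit appeal to Fukuhara's theorem that $T_n^\infty$ preserves $\ee_w$ just spells out what the paper leaves as ``easily follows'').
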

\begin{proof}
Choose any $E\in \ee_{w}$ then we have to show $T_{n}^{\infty}\widetilde{f}_{E}=\widetilde{f}_{T_{n}^{\infty}E}$. By the direct computation, 
\begin{align*}
T_{n}^{\infty}\widetilde{f}_{E}\left(\frac{k}{h}\right)&=\sum_{ad=n, d>0}d^{w}\sum_{b=0}^{d-1}\widetilde{f}_{E}\left(\frac{ak+bh}{dh}\right) \\
&=\sum_{ad=n, d>0}d^{w}\sum_{b=0}^{d-1}(dh)^{-w}E(dh, ak+bh) \\
&=h^{-w}\sum_{ad=n, d>0}\sum_{b=0}^{d-1}E(dh, ak+bh) \\
&=h^{-w}(T_{n}^{\infty}E)(h, k)=\widetilde{f}_{T_{n}^{\infty}E}\left(\frac{k}{h}\right)
\end{align*}
for any $(h, k)\in\zz^{+}\times\zz$ so we get $T_{n}^{\infty}\widetilde{f}_{E}=\widetilde{f}_{T_{n}^{\infty}E}$. 
 $T_{n}^{\infty}(\QQ_{\ee, -w})\subseteq \QQ_{\ee, -w}$ easily follows from compatibility. \qed
\end{proof}

Since Hecke operators on $S_{w+2}$ are compatible with Hecke operators on $\ee_{w}$, we get the following corollary. 
\begin{corollary}
The Hecke operator on  $\QQ_{\ee, -w}$ is compatible with Hecke operator on $S_{w+2}$, i.e. the following diagram commutes
\begin{center}
\begin{tikzcd}
S_{w+2} \arrow[r, "T_{n}"] \arrow[d, "Q_{w+2}"] & S_{w+2} \arrow[d, "Q_{w+2}"] \\
\QQ_{\ee, -w}  \arrow[r, "T_{n}^{\infty}"] & \QQ_{\ee, -w} 
\end{tikzcd}
\end{center}
\end{corollary}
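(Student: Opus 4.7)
The plan is simply to stack two commutative squares. By definition $Q_{w+2} = \Psi_w \circ \alpha_{w+2}$, so the square in the corollary factors as the vertical composition
\begin{center}
\begin{tikzcd}
S_{w+2} \arrow[r, "T_n"] \arrow[d, "\alpha_{w+2}"] & S_{w+2} \arrow[d, "\alpha_{w+2}"] \\
\ee_w \arrow[r, "T_n^\infty"] \arrow[d, "\Psi_w"] & \ee_w \arrow[d, "\Psi_w"] \\
\QQ_{\ee,-w} \arrow[r, "T_n^\infty"] & \QQ_{\ee,-w}
\end{tikzcd}
\end{center}
The upper square commutes by Fukuhara's compatibility theorem recalled above, namely $\alpha_{w+2}(T_n f) = T_n^\infty \alpha_{w+2}(f)$ for any $f\in S_{w+2}$. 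The lower square is the first statement of the proposition just proved, $\Psi_w \circ T_n^\infty = T_n^\infty \circ \Psi_w$.

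Composing the two squares vertically, for every $f\in S_{w+2}$ we obtain
\begin{align*}
T_n^\infty Q_{w+2}(f) &= T_n^\infty \Psi_w \alpha_{w+2}(f) = \Psi_w T_n^\infty \alpha_{w+2}(f) \\
&= \Psi_w \alpha_{w+2}(T_n f) = Q_{w+2}(T_n f),
\end{align*}
which is the required commutativity.

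There is no real obstacle here: both compatibility results have already been established, so the corollary is a one-line diagram chase. The only thing to double-check is that $T_n^\infty$ on $\QQ_{\ee,-w}$ is well defined on the image of $\Psi_w$, but this is precisely part (2) of the preceding proposition.
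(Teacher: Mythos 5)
Your proof is correct and follows exactly the paper's route: the paper derives the corollary by combining Fukuhara's compatibility $\alpha_{w+2}(T_n f) = T_n^\infty \alpha_{w+2}(f)$ with the just-proved compatibility of $T_n^\infty$ with $\Psi_w$, which is precisely your stacking of the two squares. Your version merely spells out the one-line diagram chase that the paper leaves implicit.
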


Now we will show that the definition of Hecke operator on $\QQ_{\ee, -w}$ can be extended to the whole space $\QQ_{-w}$. 
\begin{theorem}
$T_{n}^{\infty}$ can be extended to $\QQ_{-w}$ by the same way : for any quantum modular form $\widetilde{f}\in \QQ_{-w}$, define
$$
(T_{n}^{\infty}\widetilde{f})(x):=\sum_{ad=n, d>0}d^{w}\sum_{b=0}^{d-1}\widetilde{f}\left(\frac{ax+b}{d}\right)
$$
then $T_{n}^{\infty}\widetilde{f}\in \QQ_{-w}$. Also, $T_{n}^{\infty}(\QQ_{-w}^{p})\subseteq \QQ_{-w}^{p}, T_{n}^{\infty}(\QQ_{poly, -w}^{p})\subseteq \QQ_{poly, -w}^{p}$. 
\end{theorem}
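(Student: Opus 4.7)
The plan is to translate the problem into the framework of the Choie--Zagier identities in Theorem~\ref{period}, viewing $T_n^\infty$ as the element
$$T_n^\infty=\sum_{\substack{ad=n\\ a,d>0}}\sum_{b=0}^{d-1}\begin{pmatrix}a&b\\0&d\end{pmatrix}\in\RR_n=\zz[\mm_n]$$
acting on functions $\qq\to\cc$ from the right by the weight $-w$ slash operator. Under this identification, the sum defining $T_n^\infty\widetilde f$ in the statement is, up to the inessential overall scalar $n^{-w/2}$, precisely $\widetilde f\,|\,T_n^\infty$.

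First, I would compute for any $\gamma\in\SL_2(\zz)$ the cocycle
$$h^{\mathrm{new}}_\gamma\;:=\;T_n^\infty\widetilde f-(T_n^\infty\widetilde f)\,|\,\gamma\;=\;\widetilde f\,|\,T_n^\infty(I-\gamma)$$
at the two generators $\gamma=T,S$. Feeding the Choie--Zagier identities
$$T_n^\infty(I-T)=(I-T)X_n,\qquad T_n^\infty(I-S)=(I-S)\widetilde T_n+(I-T)Y_n$$
into this formula collapses it to
$$h^{\mathrm{new}}_T=h_T\,|\,X_n,\qquad h^{\mathrm{new}}_S=h_S\,|\,\widetilde T_n+h_T\,|\,Y_n,$$
where $h_T,h_S$ denote the cocycles of $\widetilde f$ at $T,S$. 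Each right-hand side is a finite $\zz$-linear combination of slash-translates of $h_T$ and $h_S$ by matrices in $\mm_n$, so each inherits the ``extends $C^{\infty}$ to $\rr$ except at finitely many rationals'' property directly from $h_T$ and $h_S$.

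Next I would invoke the cocycle relation $h_{\gamma_1\gamma_2}=h_{\gamma_1}\,|\,\gamma_2+h_{\gamma_2}$ together with $\SL_2(\zz)=\langle S,T\rangle$ to propagate this smoothness-except-finitely-many-points property from the two generators to every $\gamma\in\SL_2(\zz)$; this proves $T_n^\infty\widetilde f\in\QQ_{-w}$. The remaining two inclusions are then immediate. If $\widetilde f\in\QQ_{-w}^{p}$ then $h_T=0$, hence $h^{\mathrm{new}}_T=0$ and $T_n^\infty\widetilde f$ is again periodic. If additionally $h_S$ is a polynomial of degree $\leq w$, then $h^{\mathrm{new}}_S=h_S\,|\,\widetilde T_n$ is a finite sum of weight $-w$ slash-translates of such a polynomial by matrices of determinant $n$; writing $h_S(y)=\sum_{j=0}^{w}c_j y^{j}$ makes it obvious that $(cx+d)^{w}h_S((ax+b)/(cx+d))=\sum_{j}c_{j}(ax+b)^{j}(cx+d)^{w-j}$ is once again a polynomial of degree $\leq w$ in $x$.

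The main obstacle is conceptual rather than computational: one must be sure that the Choie--Zagier identities, originally formulated on the space of period polynomials of cusp forms, do in fact hold as identities in the group algebra $\RR_n$ itself (for the chosen canonical representative of $T_n^\infty$) and therefore apply verbatim at the level of cocycles of arbitrary functions $\qq\to\cc$. Once that bookkeeping is in place, stability of ``smooth extension off a finite set'' and of ``polynomial of degree $\leq w$'' under the finite slash-averages that appear above is formal.
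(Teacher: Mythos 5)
Your proposal is correct and follows essentially the same route as the paper: both reduce the cocycles of $T_{n}^{\infty}\widetilde{f}$ at the generators $T$ and $S$ to $h_{T}|X_{n}$ and $h_{S}|\widetilde{T}_{n}+h_{T}|Y_{n}$ via the Choie--Zagier identities in $\RR_{n}$ (which the paper's Theorem~\ref{period} already states at the level of the group algebra, settling the bookkeeping concern you raise), and then observe that finite slash-averages preserve both smooth extendability off a finite set and the polynomial-of-degree-$\leq w$ property. Your write-up merely makes explicit two steps the paper leaves implicit, namely the propagation from the generators to all of $\SL_{2}(\zz)$ via the cocycle relation and the degree count for $h_{S}|\widetilde{T}_{n}$.
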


\begin{proof}
We only need to check $(f|T_{n}^{\infty}(I-T))(x)$ and $(f|T_{n}^{\infty}(I-S))(x)$ can be regarded as smooth function on $\rr$ with finitely many singular points. 
Using the Theorem \ref{period} again, we have
\begin{align*}
(f|T_{n}^{\infty}(I-T))(x) &=(f|(I-T)X_{n})(x)=(h_{T}|X_{n})(x) \\
(f|T_{n}^{\infty}(I-S))(x)&=(f|(I-S)\widetilde{T}_{n})(x)+(f|(I-T)Y_{n})(x)\\
&=\widetilde{T}_{n}h_{S}(x)+(h_{T}|Y_{n})(x)
\end{align*}
and all of these can be extended smoothly on $\rr$ except finitely many points since $h_{T}$ and $h_{S}$ does.  
If $f\in \QQ_{-w}^{p}$, then period function of $T_{n}^{\infty}f$ is $\widetilde{T}_{n}h$ where $h$ is a period function of $f$. \qed
\end{proof}

In fact, almost all quantum modular forms are defined on a congruence subgroup with a nontrivial multiplier system, so our definition of the Hecke operator is not very useful. 
So we will define some sort of generalized version of the Hecke operator; which are defined on the space of quantum modular forms of integer weight with a nontrivial multiplier system $\chi$ on some congruence subgroup. 
We will check that this operator \emph{changes} multiplier system.

\begin{definition}
Let $\Gamma\leq \SL_{2}(\zz)$ be a congruence subgroup and let $\chi, \chi':\Gamma\to \mathbb{S}^{1}$ be any two multiplier systems and $\alpha\in \GL_{2}^{+}(\qq)$. 
For any $\alpha\in \GL_{2}^{+}(\qq)$, we call two multiplier systems are \emph{compatible at $\alpha$} if the function $c_{\chi, \chi'}:\Gamma\alpha\Gamma\to \mathbb{S}^{1}$ defined by
$$
c_{\chi, \chi'}(\gamma_{1}\alpha\gamma_{2})=\chi(\gamma_{1})\chi'(\gamma_{2})
$$ 
is a well-defined function, i.e. for any element $\gamma_{1}\alpha\gamma_{2}=\delta_{1}\alpha\delta_{2}$ in $\Gamma\alpha\Gamma$, we have
$$
\chi(\gamma_{1})\chi'(\gamma_{2})=\chi(\delta_{1})\chi'(\delta_{2}).
$$
\end{definition}

We can easily check that for any given multiplier system $\chi$ and matrix $\alpha$, there is at most one multiplier system $\chi'$ compatible at $\alpha$.

Using this function, we can define a Hecke operator on the space of quantum modular forms which changes multiplier system. 
\begin{theorem}
Let $\alpha\in \GL_{2}^{+}(\qq)$ and $\Gamma\leq\SL_{2}(\zz)$ be a congruence subgroup. 
Let $\{\beta_{j}=\alpha_{j}\alpha\alpha_{j}'\}_{j\in J}$ be the set of representatives of orbits $\Gamma\backslash \Gamma\alpha\Gamma$. 
Suppose two multiplier systems $\chi, \chi':\Gamma\to\mathbb{S}^{1}$ are compatible at $\alpha$, i.e. there exists well-defined function $c_{\chi, \chi'}:\Gamma\alpha\Gamma\to \mathbb{S}^{1}$. 
Then for $f\in \QQ_{k}(\Gamma, \chi)$, define a Hecke operator $T^{\infty}_{\alpha, \chi, \chi'}$ by
$$
T^{\infty}_{\alpha, \chi, \chi'}f=\sum_{j}c_{\chi, \chi'}(\beta_{j})^{-1}f|\beta_{j}
$$
where $|$ is $k$-th slash operator which satisfies $f|\gamma_{1}\gamma_{2}=(f|\gamma_{1})|\gamma_{2}$ for any $\gamma_{1}, \gamma_{2}\in \GL_{2}^{+}(\qq)$. 
Then $T_{\alpha, \chi, \chi'}f\in \QQ_{k}(\Gamma, \chi')$.
\end{theorem}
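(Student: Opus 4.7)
The plan is to verify the two defining properties of a quantum modular form for $g := T^{\infty}_{\alpha,\chi,\chi'}f$: namely, that $g$ is a well-defined function $\qq\to\cc$, and that for every $\gamma\in\Gamma$ the function $g - g|_{k,\chi'}\gamma$ extends smoothly to $\rr$ outside finitely many points. Throughout, the key computational input will be the two transformation rules for the cocycle $c_{\chi,\chi'}$:
\begin{align*}
c_{\chi,\chi'}(\gamma\beta) &= \chi(\gamma)\,c_{\chi,\chi'}(\beta), \\
c_{\chi,\chi'}(\beta\gamma) &= c_{\chi,\chi'}(\beta)\,\chi'(\gamma),
\end{align*}
for $\gamma\in\Gamma$ and $\beta\in\Gamma\alpha\Gamma$, which follow immediately from $\beta = \alpha_j\alpha\alpha_j'$ and the fact that $\chi,\chi'$ are homomorphisms (since we are working modulo the smooth cocycle, no group-theoretic subtlety is needed beyond this).

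Next, to handle the transformation under $\gamma\in\Gamma$, I would note that right multiplication by $\gamma$ permutes the left cosets $\Gamma\backslash\Gamma\alpha\Gamma$, so we may write $\beta_j\gamma = \gamma_j\,\beta_{\sigma(j)}$ for a permutation $\sigma$ of $J$ and elements $\gamma_j\in\Gamma$. Comparing the two expressions for $c_{\chi,\chi'}(\beta_j\gamma)$ via the rules above yields the crucial identity
$$ \chi(\gamma_j)\,c_{\chi,\chi'}(\beta_{\sigma(j)}) \;=\; c_{\chi,\chi'}(\beta_j)\,\chi'(\gamma). $$
Now I apply the slash operator $|\gamma$ to $g$, substitute $\beta_j\gamma = \gamma_j\beta_{\sigma(j)}$, and use the quantum-modular defect $f|\gamma_j = \chi(\gamma_j)\,f - \chi(\gamma_j)\,h_{\gamma_j}$ coming from $f\in\QQ_k(\Gamma,\chi)$. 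The main term reassembles, via the displayed identity and reindexing by $\sigma$, into $\chi'(\gamma)\,g$; the leftover is
$$ H_\gamma(x) \;:=\; \sum_{j} c_{\chi,\chi'}(\beta_j)^{-1}\chi(\gamma_j)\,(h_{\gamma_j}|\beta_{\sigma(j)})(x). $$
Thus $g|\gamma = \chi'(\gamma)g - H_\gamma$, or equivalently $g - g|_{k,\chi'}\gamma = \chi'(\gamma)^{-1}H_\gamma$.

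Finally I would verify that $H_\gamma$ has the right regularity. Each $h_{\gamma_j}$ extends smoothly to $\rr$ off a finite set $S_j\subset\qq$ by definition of $\QQ_k(\Gamma,\chi)$; composing with the Möbius action of $\beta_{\sigma(j)}\in\GL_2^{+}(\qq)$ pulls this finite set back to another finite set and multiplies by the smooth factor $(cx+d)^{-k}$ (adding at most one pole at $x=-d/c$), so each $h_{\gamma_j}|\beta_{\sigma(j)}$ is smooth on $\rr$ outside finitely many points, and the finite sum $H_\gamma$ inherits this property. This shows $g\in\QQ_k(\Gamma,\chi')$. The well-definedness of $g$ up to this same class of smooth-almost-everywhere ambiguity follows from the identical calculation applied to a different choice of representatives, since switching $\beta_j$ to $\delta_j\beta_j$ produces exactly an $H$-type term. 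I expect the main obstacle to be bookkeeping the permutation $\sigma$ and the elements $\gamma_j$ consistently enough to see the main term collapse to $\chi'(\gamma)g$; once the cocycle identity above is in hand, the rest is formal.
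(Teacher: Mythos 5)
Your proposal is correct and follows essentially the same route as the paper: decompose $\beta_j\gamma=\gamma_j\beta_{\sigma(j)}$, use the two-sided transformation law of $c_{\chi,\chi'}$ to get $\chi(\gamma_j)c_{\chi,\chi'}(\beta_{\sigma(j)})=c_{\chi,\chi'}(\beta_j)\chi'(\gamma)$, substitute the defect $f|\gamma_j=\chi(\gamma_j)f-\chi(\gamma_j)h_{\gamma_j}$, and observe that the main term reindexes to $\chi'(\gamma)g$ while the leftover $\sum_j c_{\chi,\chi'}(\beta_{\sigma(j)})^{-1}h_{\gamma_j}|\beta_{\sigma(j)}$ is a finite sum of functions smooth off finitely many points. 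Your added remarks on the regularity of $h_{\gamma_j}|\beta_{\sigma(j)}$ and on independence of the choice of representatives only make explicit what the paper leaves implicit.
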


\begin{proof}
For $\gamma\in \Gamma$, there exists a permutation $\sigma_{\gamma}:J\to J$ such that for each $j\in J$, $\beta_{j}\gamma\in \Gamma\beta_{\sigma_{\gamma}(j)}$, i.e.
$
\beta_{j}\gamma=\gamma_{j}\beta_{\sigma_{\gamma}(j)}
$
for some $\gamma_{j}\in \Gamma$.
Since $f\in \QQ_{k}(\Gamma, \chi)$, for any $\gamma\in \Gamma$,
$$
f-\chi(\gamma)^{-1}f|\gamma=h_{\gamma}
$$
where $h_{\gamma}$ is a function on $\qq$ which can be extended to $\rr$ smoothly except finitely many points. Then 
\begin{align*}
&T_{\alpha, \chi, \chi'}^{\infty}f-\chi'(\gamma)^{-1}T^{\infty}_{\alpha, \chi, \chi'}f|\gamma \\
&=\sum_{j}c_{\chi, \chi'}(\beta_{j})^{-1}f|\beta_{j}-\chi'(\gamma)^{-1}\sum_{j}c_{\chi, \chi'}(\beta_{j})^{-1}f|\beta_{j}\gamma \\
&=\sum_{j}c_{\chi, \chi'}(\beta_{j})^{-1}f|\beta_{j}-\chi'(\gamma)^{-1}\sum_{j}c_{\chi, \chi'}(\beta_{j})^{-1}f|\gamma_{j}\beta_{\sigma_{\gamma}(j)} \\
&=\sum_{j}c_{\chi, \chi'}(\beta_{j})^{-1}f|\beta_{j}-\chi'(\gamma)^{-1}\sum_{j}c_{\chi, \chi'}(\beta_{j})^{-1}(\chi(\gamma_{j})f-\chi(\gamma_{j})h_{\gamma_{j}})|\beta_{\sigma_{\gamma}(j)} \\
&=\sum_{j}c_{\chi, \chi'}(\beta_{j})^{-1}f|\beta_{j}-\sum_{j}c_{\chi, \chi'}(\beta_{\sigma_{\gamma}(j)})^{-1}f|\beta_{\sigma_{\gamma}(j)}+\sum_{j}c_{\chi, \chi'}(\beta_{\sigma_{\gamma}(j)})^{-1}h_{\gamma_{j}}|\beta_{\sigma_{\gamma}(j)} \\
&=\sum_{j}c_{\chi, \chi'}(\beta_{\sigma_{\gamma}(j)})^{-1}h_{\gamma_{j}}|\beta_{\sigma_{\gamma}(j)}
\end{align*}
The last term is a finite sum of smooth functions so it is also smooth function on $\rr$ itself (except finitely many points). \qed
\end{proof}

So, when are $\chi$ and $\chi'$ compatible at $\alpha$? To check compatiblity, we will use the following lemma. 
\begin{lemma}
\label{compcheck}
$\chi$ and $\chi'$ are compatible at $\alpha$ if and only if $\chi(\gamma) =\chi'(\alpha^{-1}\gamma\alpha)$ for any $\gamma\in \Gamma\cap \alpha\Gamma\alpha^{-1}$.
\end{lemma}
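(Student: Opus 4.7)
The plan is to unpack the definition of compatibility and reduce it, by an elementary rearrangement, to a statement about the subgroup $\Gamma \cap \alpha\Gamma\alpha^{-1}$.

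First I would rewrite the well-definedness condition for $c_{\chi,\chi'}$ explicitly: compatibility at $\alpha$ means that whenever $\gamma_{1}\alpha\gamma_{2} = \delta_{1}\alpha\delta_{2}$ in $\Gamma\alpha\Gamma$, we have $\chi(\gamma_{1})\chi'(\gamma_{2}) = \chi(\delta_{1})\chi'(\delta_{2})$. The key observation is that such a collision is completely controlled by an element of $\Gamma \cap \alpha\Gamma\alpha^{-1}$: setting $\eta := \delta_{1}^{-1}\gamma_{1} \in \Gamma$, the equation $\gamma_{1}\alpha\gamma_{2} = \delta_{1}\alpha\delta_{2}$ is equivalent to $\eta\alpha = \alpha(\delta_{2}\gamma_{2}^{-1})$, which says that $\alpha^{-1}\eta\alpha = \delta_{2}\gamma_{2}^{-1} \in \Gamma$, so $\eta \in \Gamma \cap \alpha\Gamma\alpha^{-1}$. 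Conversely, every $\eta$ in the latter intersection arises this way.

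For the $(\Rightarrow)$ direction, I would take $\eta \in \Gamma \cap \alpha\Gamma\alpha^{-1}$ and exhibit the two factorizations $\eta \cdot \alpha \cdot 1 = 1 \cdot \alpha \cdot (\alpha^{-1}\eta\alpha)$ of the same element of $\Gamma\alpha\Gamma$. Compatibility then forces $\chi(\eta)\chi'(1) = \chi(1)\chi'(\alpha^{-1}\eta\alpha)$, i.e.\ $\chi(\eta) = \chi'(\alpha^{-1}\eta\alpha)$. For the $(\Leftarrow)$ direction, given any collision $\gamma_{1}\alpha\gamma_{2} = \delta_{1}\alpha\delta_{2}$, I would set $\eta = \delta_{1}^{-1}\gamma_{1}$, verify as above that $\eta \in \Gamma \cap \alpha\Gamma\alpha^{-1}$ with $\alpha^{-1}\eta\alpha = \delta_{2}\gamma_{2}^{-1}$, and then use the multiplicativity of $\chi$ and $\chi'$ (they are characters on $\Gamma$, as in the example with $\chi(T)=\chi(R)=\zeta_{24}$) to rewrite the hypothesis $\chi(\eta) = \chi'(\alpha^{-1}\eta\alpha)$ as $\chi(\delta_{1})^{-1}\chi(\gamma_{1}) = \chi'(\delta_{2})\chi'(\gamma_{2})^{-1}$, which rearranges to the required identity $\chi(\gamma_{1})\chi'(\gamma_{2}) = \chi(\delta_{1})\chi'(\delta_{2})$.

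The main subtlety, rather than any computational obstacle, is the implicit use of multiplicativity of $\chi$ and $\chi'$; this is the only place the argument uses more than the raw definitions, and it is exactly where the framework assumes the multiplier systems are genuine characters of $\Gamma$.
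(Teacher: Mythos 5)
Your proposal is correct and follows essentially the same route as the paper: the forward direction evaluates $c_{\chi,\chi'}$ on the two factorizations of $\gamma\alpha = \alpha(\alpha^{-1}\gamma\alpha)$, and the converse rewrites a collision $\gamma_{1}\alpha\gamma_{2}=\delta_{1}\alpha\delta_{2}$ as $\gamma_{1}^{-1}\delta_{1}\alpha=\alpha\gamma_{2}\delta_{2}^{-1}$ and applies multiplicativity, exactly as in the paper (your $\eta=\delta_{1}^{-1}\gamma_{1}$ is just the inverse of the paper's element). Your closing remark about multiplicativity being the one substantive input is accurate, and the paper makes the same implicit assumption that $\chi,\chi'$ are genuine characters.
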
 
\begin{proof}
($\Rightarrow$) Let $\gamma \in\Gamma\cap\alpha\Gamma\alpha^{-1}$, so that $\gamma = \alpha\gamma'\alpha^{-1}\Leftrightarrow \gamma\alpha = \alpha\gamma'$ for some $\gamma'\in \Gamma$. Then by compatibility, we have $\chi(\gamma) = c_{\chi, \chi'}(\gamma\alpha) = c_{\chi, \chi'}(\alpha\gamma') = \chi'(\gamma') = \chi'(\alpha^{-1}\gamma\alpha)$. \\
($\Leftarrow$) Assume that $\chi(\gamma)=\chi'(\alpha^{-1}\gamma\alpha)$ holds for any $\gamma\in \Gamma\cap \alpha\Gamma\alpha^{-1}$. If  $\gamma_{1}\alpha\gamma_{2} = \delta_{1}\alpha\delta_{2}\Leftrightarrow \gamma_{1}^{-1}\delta_{1}\alpha=\alpha\gamma_{2}\delta_{2}^{-1}$ in $\Gamma\alpha\Gamma$, then 
$$
\chi(\gamma_{1})^{-1}\chi(\delta_{1})=\chi(\gamma_{1}^{-1}\delta_{1}) = \chi(\alpha\gamma_{2}\delta_{2}^{-1}\alpha^{-1}) = \chi'(\gamma_{2}\delta_{2}^{-1}) = \chi'(\gamma_{2})\chi'(\delta_{2})^{-1}.
$$
so $\chi(\gamma_{1})\chi'(\gamma_{2}) = \chi(\delta_{1})\chi'(\delta_{2})$. 
\qed
\end{proof}
We will focus on the case when  $\Gamma=\Gamma_{0}(N)$ and $\alpha=\left(\begin{smallmatrix}1&0\\0&p\end{smallmatrix}\right)$, for $p\nmid N$. 
We will use a notation $T_{p, \chi, \chi'}$ for $T_{\alpha,\chi, \chi'}$. 
In this case, it is known that (see \cite{ds05}) the set of representatives of orbits $\Gamma_{0}(N)\backslash \Gamma_{0}(N)\left(\begin{smallmatrix}1&0\\0&p\end{smallmatrix}\right)\Gamma_{0}(N)$ can be chosen as 
$$
\beta_{0}=\begin{pmatrix} 1 & 0 \\ 0 & p\end{pmatrix}, \beta_{1}=\begin{pmatrix} 1& 1 \\ 0 & p \end{pmatrix}, \cdots, \beta_{p-1}=\begin{pmatrix} 1& p-1 \\ 0 &p\end{pmatrix}, \beta_{\infty}=\begin{pmatrix} p & 0 \\ 0 & 1\end{pmatrix}.
$$
%If both $\chi$ and $\chi'$ are trivial, then they are compatible at (any) $\alpha$ and  $T_{p, 1, 1}$ is same as ordinary Hecke operator $T_{p}$ on the space of modular forms. 

As an example, we will consider the Zagier's quantum modular form $f:\qq\to \cc$ introduced in  the Section 2.  
Recall that the function $f$ is the weight 1 quantum modular form on $\Gamma_{0}(2)$ with the multiplier system $\chi$ defined as
$$
\chi(T)=\chi\left(\m{1&1\\0&1}\right)=\zeta_{24},\qquad \chi(R)=\chi\left(\m{1&0\\2&1}\right)=\zeta_{24}.
$$
For any prime $p\geq 5$, we can easily check that $\Gamma\cap \alpha\Gamma\alpha^{-1} =\Gamma_{0}(2p)$ when $\Gamma = \Gamma_{0}(2)$ and $\alpha = \left(\begin{smallmatrix} 1&0\\0&p\end{smallmatrix}\right)$. By SAGE, we made a program to compute the value of $\chi(\gamma)$ for any given $\gamma\in \Gamma_{0}(2)$. Also, SAGE provides enviroment to find generators of congruence subgroups, so we've checked that $\chi$ and $\chi'=\chi^{p}$ are compatible at $p$ for any prime $5\leq p\leq 757$, i.e. $\chi(\gamma) = \chi(\alpha^{-1}\gamma\alpha)^{p}$ for any $\gamma\in \Gamma_{0}(2p)$ by compute its values on generators of $\Gamma_{0}(2p)$. 
For example, in case of $p=5$, generators of $\Gamma_{0}(10)$ are 
$$
\begin{pmatrix} 1&1\\0&1\end{pmatrix}, \quad \begin{pmatrix} 3&-1 \\ 10&-3\end{pmatrix}, \quad \begin{pmatrix} 19 & -7 \\ 30 & -11\end{pmatrix}, \quad \begin{pmatrix} 11 & -5 \\ 20 & -9 \end{pmatrix}, \quad  \begin{pmatrix} 7 & -5 \\ 10 & -7 \end{pmatrix} 
$$ 
and both $\chi\left(\left(\begin{smallmatrix} a&b\\c&d \end{smallmatrix} \right)\right)$ and $\chi\left(\alpha\left(\begin{smallmatrix} a & b\\c&d\end{smallmatrix}\right)\alpha^{-1}\right)^{5} = \chi\left(\left(\begin{smallmatrix} a& 5b \\ c/5 & d \end{smallmatrix}\right)\right)^{5}$ have values
$$
\zeta_{24}, \quad 1, \quad \zeta_{24}^{20}, \quad \zeta_{24}^{19}, \quad 1. 
$$

\begin{comment}

Now let $\chi'=\chi^{7}$. 
\begin{lemma}
$\chi$ and $\chi'$ are compatible at 7. 
\end{lemma}

\begin{proof}
We will define $c_{\chi, \chi'}:\Gamma_{0}(2)\sm{1&0\\0&7}\Gamma_{0}(2)\to \mathbb{S}^{1}$ as
\begin{align*}
c_{\chi, \chi'}(\beta_{\infty})&=c_{\chi, \chi'}(-T^{4}R^{-1}\sm{1&0\\0&7}T^{4}R^{-1})=1  \\
c_{\chi, \chi'}(\beta_{j})&=c_{\chi, \chi'}\left(\sm{1&0\\0&7}T^{j}\right)=\zeta_{24}^{7j}, \qquad 0\leq j\leq 6 
\end{align*}
To check well-definedness of $c_{\chi, \chi'}$, we only have to check for the following 16 elements:
\begin{alignat*}{4}
&\beta_{\infty}T=T^{7}\beta_{\infty}, \,\, &&\beta_{0}T=\beta_{1} , \,\, &&\beta_{1}T=\beta_{2}, \,\, && \beta_{2}T=\beta_{3},\\
&\beta_{3}T=\beta_{4},\,\,&&\beta_{4}T=\beta_{5},\,\,&&\beta_{5}T=\beta_{6},\,\,&&\beta_{6}T=T\beta_{0},\\
 &\beta_{\infty}R= -T^{4}R^{-1}\beta_{4},\,\,&&\beta_{0}R= R^{7}\beta_{0},\,\,&&\beta_{1}R= -R^{2}T^{2}T^{-1}\beta_{5},\,\,&&\beta_{2}R= RTR^{2}T^{-1}\beta_{6}, \\ 
&\beta_{3}R= RT^{3}\beta_{\infty}, \,\,&&\beta_{4}R= RT^{-2}R^{-2}\beta_{2}, \,\, &&\beta_{5}R= -TR^{-2}T^{-1}R^{-1}\beta_{3}, \,\,&& \beta_{6}R= -TR^{-7}\beta_{1}. 
\end{alignat*}
We can check that $c_{\chi, \chi'}$ is well defined for these elements, so $\chi$ and $\chi'$ are compatible at $7$. \qed
\end{proof}
\end{comment}

By compatibility of multiplier systems, we can define the Hecke operators $T_{p}^{\infty}=T^{\infty}_{p, \chi, \chi^{p}}:\QQ_{1}(\Gamma_{0}(2), \chi)\to \QQ_{1}(\Gamma_{0}(2), \chi')$ for $5\leq p \leq 757$. From 
\begin{align*}
\beta_{\infty} = \begin{pmatrix} p&0\\0&1\end{pmatrix} = -T^{\frac{p+1}{2}}R^{-1} \begin{pmatrix}1&0\\0&p\end{pmatrix}T^{\frac{p+1}{2}}R^{-1}, \quad \beta_{j} = \begin{pmatrix} 1 & j \\ 0 &p \end{pmatrix} = \begin{pmatrix} 1 & 0 \\ 0 & p \end{pmatrix} T^{j},
\end{align*}
we have
$$
c_{\chi, \chi^{p}}(\beta_{\infty}) = \zeta_{24}^{\frac{p+1}{2}}\zeta_{24}^{-1}(\zeta_{24}^{\frac{p+1}{2}}\zeta_{24}^{-1})^{p} = \zeta_{24}^{\frac{p^{2}-1}{2}} = (-1)^{\frac{p^{2}-1}{24}}, \quad c_{\chi, \chi^{p}}(\beta_{j}) = \zeta_{24}^{pj}. 
$$
(Note that $24|p^{2}-1$ for any $p\geq 5$.) Now we get our main result.
\begin{theorem}
Let
$$
g(x)=T_{p}^{\infty}f(x)=(-1)^{\frac{p^{2}-1}{24}}f(px)+\frac{1}{p}\sum_{j=0}^{p-1}\zeta_{24}^{-pj}f\left(\frac{x+j}{p}\right).
$$
Then $g$ should satisfy 
$$
g(x+1)=\zeta_{24}^{p}g(x), \qquad \frac{1}{|2x+1|}g\left(\frac{x}{2x+1}\right)=\zeta_{24}^{p}g(x)+H(x)
$$
where $H$ can be extended smoothly on $\rr$ except finitely many points. 
\end{theorem}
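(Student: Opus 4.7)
The plan is to deduce the theorem directly from the general Hecke-operator theorem just established, applied with $\Gamma = \Gamma_{0}(2)$, $\alpha = \sm{1 & 0 \\ 0 & p}$, $\chi$ the Zagier multiplier of Section~2, and $\chi' = \chi^{p}$. The compatibility of $\chi$ and $\chi^{p}$ at $\alpha$ has just been verified computationally for all primes $5 \leq p \leq 757$ via Lemma~\ref{compcheck}, so the hypothesis applies. Substituting the coset representatives $\beta_{0}, \ldots, \beta_{p-1}, \beta_{\infty}$ and the values $c_{\chi, \chi^{p}}(\beta_{\infty}) = (-1)^{(p^{2}-1)/24}$ and $c_{\chi, \chi^{p}}(\beta_{j}) = \zeta_{24}^{pj}$ (computed just before the theorem) into the definition of $T^{\infty}_{\alpha, \chi, \chi^{p}}$ reproduces exactly the formula for $g = T_{p}^{\infty} f$ given in the statement, so the general theorem yields $g \in \QQ_{1}(\Gamma_{0}(2), \chi^{p})$.

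Quantum modularity applied to $\gamma = R = \sm{1 & 0 \\ 2 & 1}$, using $\chi^{p}(R) = \zeta_{24}^{p}$, immediately supplies a function $h_{R}$, smooth on $\rr$ outside a finite set, with $g(x) - \zeta_{24}^{-p} |2x+1|^{-1} g\bigl(\frac{x}{2x+1}\bigr) = h_{R}(x)$; rearranging and setting $H := -\zeta_{24}^{p} h_{R}$ produces the second displayed identity.

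The first identity, $g(x+1) = \zeta_{24}^{p} g(x)$, needs to hold as an exact equality (not merely up to a smooth error), so I would verify it by direct calculation. The $\beta_{\infty}$ contribution to $g(x+1)$ is $(-1)^{(p^{2}-1)/24} f(px+p) = \zeta_{24}^{p} (-1)^{(p^{2}-1)/24} f(px)$, using $f(y+1) = \zeta_{24} f(y)$ iterated $p$ times. For the finite sum, I would reindex $k = j+1$ to get $\zeta_{24}^{p} \sum_{k=1}^{p} \zeta_{24}^{-pk} f\bigl(\frac{x+k}{p}\bigr)$; the $k = p$ term equals $\zeta_{24}^{1-p^{2}} f(x/p)$, and the elementary congruence $p^{2} \equiv 1 \pmod{24}$ (valid for all primes $p \geq 5$) collapses this to $f(x/p)$, which is precisely the missing $k = 0$ term. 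Hence the sum is $\zeta_{24}^{p}$ times the sum in $g(x)$, and assembling the two pieces gives $g(x+1) = \zeta_{24}^{p} g(x)$.

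The only real subtlety is this last exact equality, since the quantum-modular framework for $T$ would only deliver it modulo a smooth error; upgrading to a clean identity is precisely where the arithmetic input $p^{2} \equiv 1 \pmod{24}$ enters. Everything else is a direct invocation of the preceding theorem combined with the already-verified compatibility of $\chi$ and $\chi^{p}$.
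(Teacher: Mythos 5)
Your proposal is correct and follows essentially the same route as the paper, which states this theorem as an immediate consequence of the general result on $T^{\infty}_{\alpha,\chi,\chi'}$ together with the computed values $c_{\chi,\chi^{p}}(\beta_{\infty})=(-1)^{(p^{2}-1)/24}$ and $c_{\chi,\chi^{p}}(\beta_{j})=\zeta_{24}^{pj}$, offering no separate proof. Your direct verification that $g(x+1)=\zeta_{24}^{p}g(x)$ holds exactly (via the reindexing and $p^{2}\equiv 1\Mod{24}$) is a correct and worthwhile addition, since the general theorem as stated only guarantees the relation up to a smooth error; alternatively one can trace through its proof and note that every $h_{\gamma_{j}}$ arising for $\gamma=T$ vanishes identically because $f(x+1)=\zeta_{24}f(x)$ is exact.
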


We can naturally come up with the following questions. \\
\textbf{Question 1}. \emph{Are $\chi$ and $\chi^{p}$ compatible at $\alpha = \left(\begin{smallmatrix} 1&0 \\0&p\end{smallmatrix}\right)$ for any $p\geq 5$? i.e. Does the equation 
$$
\chi\left(\begin{pmatrix} a&b \\ c&d \end{pmatrix} \right) = \chi\left(\begin{pmatrix} a & pb \\ c/p & d \end{pmatrix} \right)^{p}
$$
hold for any $\gamma = \left(\begin{smallmatrix} a& b\\ c&d \end{smallmatrix}\right)\in \Gamma_{0}(2p)$? We may need an explicit formula of $\chi(\gamma)$ in terms of $a, b, c, d$.}\\ \\
\textbf{Question 2}. \emph{Is there any Hecke {eigenform} in the space of quantum modular forms? For example, if $p\equiv 1\Mod{24}$, then $T_{p}^{\infty}:\QQ_{1}(\Gamma_{0}(2), \chi)\to \QQ_{1}(\Gamma_{0}(2), \chi^{p}) = \QQ_{1}(\Gamma_{0}(2), \chi)$. Is Zagier's quantum modular form $f$ an eigenform with respect to such $T_{p}^{\infty}$? i.e. is there $\lambda_{p}\in \cc$ such that $T_{p}^{\infty}f(x) = \lambda_{p} f(x)$ for any $x\in \qq$?} \\ 
Actually, we will prove that this is true in a subsequent paper. \\ \\
\textbf{Question 3}. \emph{Can we extend the result for half-integral weight (quantum) modular forms?} \\
In \cite{za10}, there are a lot of examples of quantum modular forms of half-integral weight. If we can develop the similar theory that can be applied these forms, we might get some interesting results.

\end{document}